\newtheorem{thm11}{Theorem}[section]
\newtheorem{theorem}[thm11]{Theorem}
\newtheorem{lemma}[thm11]{Lemma}
\newtheorem{proposition}[thm11]{Proposition}
\newtheorem{definition}[thm11]{Definition}
\newtheorem{example}[thm11]{Example}
\newenvironment{proof}{\emph{Proof. }}{\hspace*{\fill}$\Box$\par\vskip2ex}
\numberwithin{equation}{section}
\journal{}
\def\ps@pprintTitle{%
	\let\@oddhead\@empty
	\let\@evenhead\@empty
	\def\@oddfoot
	{\hbox to \textwidth%
		{\ifnopreprintline\relax\else
			\@myfooterfont%
			\ifx\@elsarticlemyfooteralign\@elsarticlemyfooteraligncenter%
			\hfil\@elsarticlemyfooter\hfil%
			\else%
			\ifx\@elsarticlemyfooteralign\@elsarticlemyfooteralignleft%
			\@elsarticlemyfooter\hfill{}%
			\else%
			\ifx\@elsarticlemyfooteralign\@elsarticlemyfooteralignright%
			{}\hfill\@elsarticlemyfooter%
			\else%
			% 取消显示期刊信息
			\hfill\@date
			\fi%
			\fi%
			\fi%
			\fi%
		}
	}%
	\let\@evenfoot\@oddfoot
}
\tikzstyle{vertex}=[circle, draw, inner sep=0pt, minimum size=6pt]%%% for tikz
\newcommand{\RS}{\operatorname{RS}}
\newcommand{\RSV}{\operatorname{RSV}}
\begin{document}

\begin{frontmatter}

\title{A method for constructing graphs with the same resistance spectrum}%\tnoteref{mytitlenote}}
%\tnotetext[mytitlenote]{Fully documented templates are available in the elsarticle package on \href{http://www.ctan.org/tex-archive/macros/latex/contrib/elsarticle}{CTAN}.}

%% Group authors per affiliation:%按隶属关系分组作者：
%\author{Elsevier\fnref{myfootnote}}
%\address{Radarweg 29, Amsterdam}
%\fntext[myfootnote]{Since 1880.}

%% or include affiliations in footnotes:%或在脚注中加入附属机构：
\author{Si-Ao Xu}
%\ead[url]{www.elsevier.com}

\author{Huan Zhou}

\fntext[myFunding]{Funding:Xiang-Feng Pan was supported by Natural Science Foundation of Anhui Province under Grant No. 2108085MA02 and University Natural Science Research Project of Anhui Province under Grant No. KJ2020A0001.
}

\author{Xiang-Feng Pan \fnref{myFunding} \corref{mycorrespondingauthor}}
\cortext[mycorrespondingauthor]{Corresponding author}
\ead{xfpan@ahu.edu.cn}

\address{School of Mathematical Sciences, Anhui University, Hefei, Anhui, 230601, P.R. China}

\begin{abstract}
Let $G=(V(G),E(G))$ be a graph with vertex set $V(G)$ and edge set $E(G)$.
The resistance distance $R_G(x,y)$ between two vertices $x,y$ of $G$ is defined to be the effective resistance between the two vertices in the corresponding electrical network in which each edge of $G$ is replaced by a unit resistor. The resistance spectrum $\RS (G)$ of a graph $G$ is the multiset of the resistance distances of all pairs of vertices in the graph. This paper presents a method for constructing graphs with the same resistance spectrum. It is obtained that for any positive integer $k$, there exist at least $2^k$ graphs with the same resistance spectrum. Furthermore, it is shown that for $n \geq 10$, there are at least $2(n-9) p(n-9)$ pairs of graphs of order $n$ with the same resistance spectrum, where $p(n-9)$ is the number of partitions of the integer $n-9$.
\end{abstract}

\begin{keyword}
Resistance distance, Resistance spectrum, Partition of positive integer
\MSC[2010] 05C12\sep 05C76
\end{keyword}

\end{frontmatter}
%\linenumbers

%%%%%%%%%%%%%%%%%%%%%%%%%%%%%%%%%%%%%%%%%%%%%%%%%%%%%
\section{Introduction}\label{sec:introduction}
%%%%%%%%%%%%%%%%%%%%%%%%%%%%%%%%%%%%%%%%%%%%%%%%%%%%%
In 1993, Klein and Randi\'{c} \cite{KR1993} introduced the concept of resistance distance based on the theory of electrical networks. The \emph{resistance distance} $R_{G} (x, y)$ between two vertices $x$ and $y$ of a graph $G$ is defined as the effective resistance of the two points in the corresponding electrical network, which the electrical network is attained from $G$ by replacing each edge of the graph with a unit resistor.

The \emph{resistance spectrum} $\RS (G)$ of a graph $G$ is defined as the multiset of the resistance distances of all pairs of vertices in the graph. The resistance spectrum of a graph had been initially used to solve the graph isomorphism problem by Baxter \cite{Baxter1999RS22} who conjectured that two graphs are isomorphic if and only if their resistance spectra are identical. However, this conjecture had been quickly disproved after some counterexamples \cite{Baxter1999Counterexample26, Rickard1999Counterexample23} were found.

All nonisomorphic simple graphs with no more than $8$ vertices are determined by their resistance spectra.
However, there are exactly $11$ and $49$ pairs of nonisomorphic graphs, each pair of which shares the same resistance spectrum, among all simple graphs with $9$ and $10$ vertices, respectively \cite{WeissteinRSEquivalent}. In addition, a number of other pairs with the same resistance spectrum but different structures have been discovered.
Figure \ref{fig:Counterexamples13} illustrates $13$ such pairs of nonisomorphic graphs, whose resistance spectra are given in Table \ref{table:Resistance spectra of the graphs}. The first $11$ pairs of graphs here are the ones with 9 vertices. The 2nd, 3rd, and 12th, 13th pairs were discovered by Baxter \cite{Baxter1999Counterexample26} and Rickard \cite{Rickard1999Counterexample23}, respectively.
A graph $G$ is said to be $DRS$ if it is determined by the resistance spectrum, that is, there is no nonisomorphic graph with the same resistance spectrum as $G$; conversely, if there is a nonisomorphic graph such that it has the same resistance spectrum as $G$, then $G$ is said to be $non$-$DRS$.

\begin{figure} [htp]
	\centering
	\includegraphics[width=0.7\textwidth]{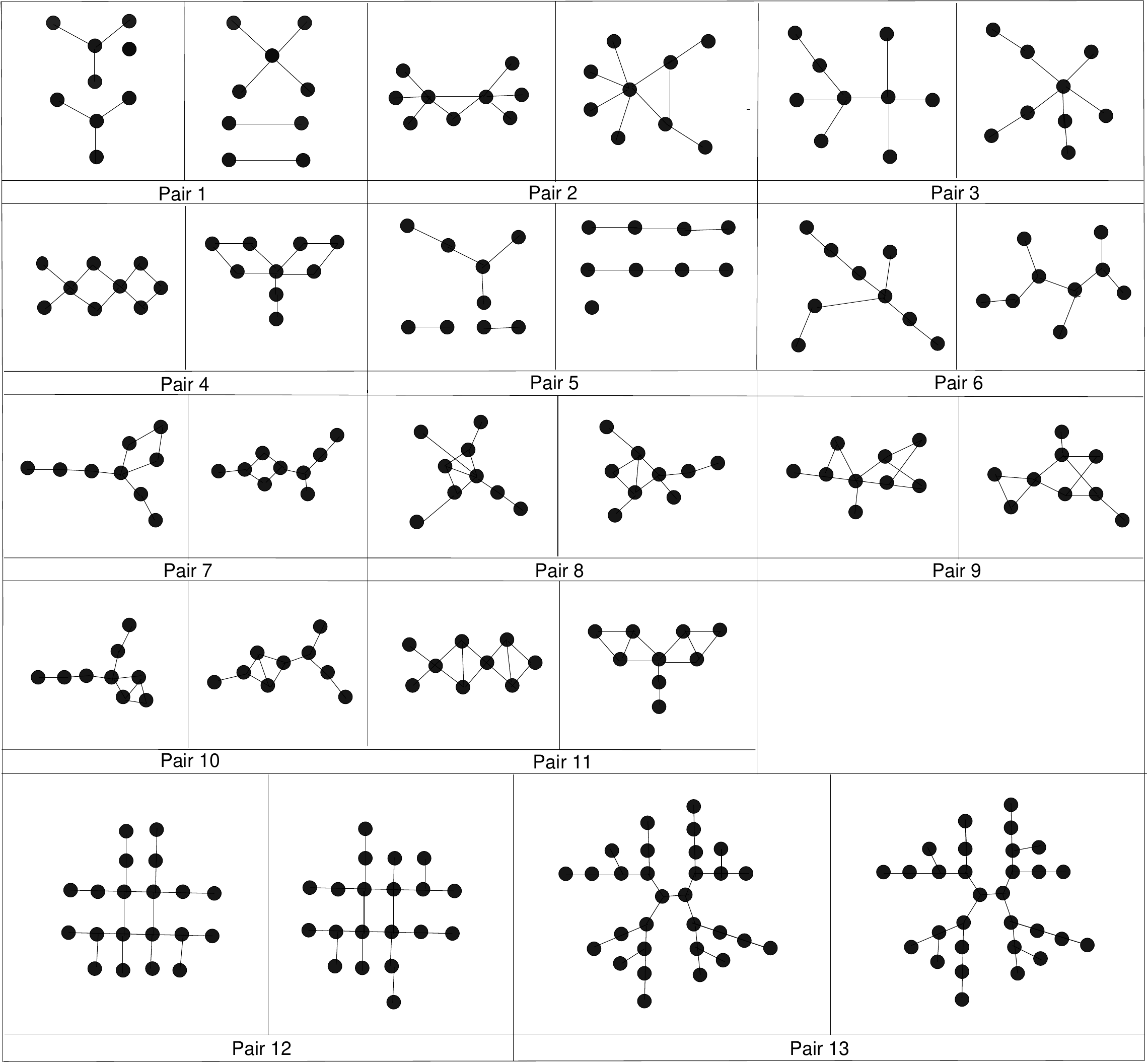}
	\caption{$13$ pairs of nonisomorphic graphs with the same resistance spectrum \cite{WeissteinRSEquivalent}.} \label{fig:Counterexamples13}
\end{figure}

\begin{table} [htp]\scriptsize
	\caption{Resistance spectra of the graphs \cite{WeissteinRSEquivalent}.} \label{table:Resistance spectra of the graphs}
	\centering
	\begin{tabular} {|c|l|}
		\hline Graph pair & Resistance spectrum\\
		\hline 1 	& $[1]^{6}$ 	 $[2]^{6}$ 	 $[+\infty]^{24}$\\
		\hline 2	& $[2/3]^{3}$ 	 $[1]^{6}$ 	 $[5/3]^{12}$ $[2]^{6}$ $[8/3]^{9}$\\
		\hline 3 	& $[1]^{8}$ 	 $[2]^{13}$ 	 $[3]^{12}$ 	 $[4]^{3}$\\
		\hline 4 	& $[3/4]^{8}$ 	 $[1]^{6}$ 	 $[3/2]^{4}$ $[7/4]^{8}$ $[2]^{4}$ $[11/4]^{4}$ $[3]^{2}$\\
		\hline 5 	& $[1]^{6}$ 	 $[2]^{4}$ 	 $[3]^{2}$ 	 $[+\infty]^{24}$\\
		\hline 6 	& $[1]^{8}$ 	 $[2]^{10}$ 	 $[3]^{10}$ 	 $[4]^{6}$ $[5]^{2}$\\
		\hline 7 	& $[1/2]^{1}$ 	 $[5/8]^{4}$ $[1]^{6}$ 	 $[3/2]^{2}$ $[13/8]^{8}$ $[2]^{4}$ $[5/2]^{1}$ $[21/8]^{6}$ $[3]^{2}$ $[29/8]^{2}$\\
		\hline 8 	& $[3/4]^{4}$ 	 $[1]^{7}$ 	 $[7/4]^{4}$ $[2]^{6}$ $[11/4]^{4}$ $[3]^{5}$ $[15/4]^{2}$ $[4]^{3}$ $[5]^{1}$\\
		\hline 9 	& $[2/3]^{10}$ $[1]^{5}$ 	 $[4/3]^{4}$ $[5/3]^{10}$ $[2]^{2}$ $[7/3]^{2}$ $[8/3]^{3}$\\
		\hline 10 	& $[1/2]^{1}$ 	 $[5/8]^{4}$ $[1]^{6}$ 	 $[13/8]^{4}$ $[2]^{6}$ $[21/8]^{4}$ $[3]^{5}$ $[29/8]^{2}$ $[4]^{3}$ $[5]^{1}$\\
		\hline 11 	& $[1/2]^{2}$ 	 $[5/8]^{8}$ $[1]^{4}$ 	 $[5/4]^{4}$ $[13/8]^{8}$ $[2]^{4}$ $[21/8]^{4}$ $[3]^{2}$\\
		\hline 12 	& $[3/4]^{4}$ 	 $[1]^{18}$ 	 $[7/4]^{16}$ 	 $[2]^{22}$ $[11/4]^{32}$ $[3]^{24}$ $[15/4]^{32}$ $[4]^{18}$ $[19/4]^{16}$ $[5]^{8}$\\
		\hline 13 	& $[1]^{29}$ 	 $[2]^{38}$ 	 $[3]^{50}$ 	 $[4]^{64}$ $[5]^{78}$ $[6]^{82}$ $[7]^{64}$ $[8]^{26}$ $[9]^{4}$\\
		\hline
	\end{tabular}
	\\\noindent \footnotesize The resistance distances are shown in ascending order. $[a/b]^{n}$ denotes $n$ occurrences of the fraction $a/b$.
\end{table}

%%%%%%%%%%%%%%%%%%%%%%%%%%%%%%%%%%%%%%%%%%%%%%%%%%%%%
\section{Preliminary knowledge}\label{sec:Preliminary}
%%%%%%%%%%%%%%%%%%%%%%%%%%%%%%%%%%%%%%%%%%%%%%%%%%%%%
In this paper, we only consider simple undirected graphs. For undefined notations and terminologies, see the book by Bondy and Murty \cite{BM2008graph}.

A partition of a positive integer $t$ is a multiset of positive integers that sum to $t$. We denote the number of partitions of $t$ by $p (t)$. For example, since $\{5\}, \{4, 1\}, \{3, 2\}, \{3, 1, 1\}, \{2, 2, 1\}, \{2, 1, 1, 1\}$, $\{1, 1, 1, 1, 1\}$ are all the partitions of $5$, $p(5)=7$.

\begin{definition}
	Let $A=\{a_1, a_2, \ldots, a_p\}$ and $B=\{b_1, b_2, \ldots, b_q\}$ be two different partitions of a positive integer $n$. We say $A$ and $B$ are \emph{of equal sums of squares} if %We denote by $ A \mathcal{T} B $ the fact that $ A $ holds relation $ \mathcal{T} $ to $ B $
	$$\sum_{i=1}^{p} a_i^2 = \sum_{i=1}^{q} b_i^2.$$
\end{definition}

\begin{example}
	$A:=\{3, 3\}, B:=\{4, 1, 1\}$ are of equal sums of squares.
\end{example}

\begin{proposition}
	Let $A$ and $B$ be two different partitions of a positive integer $n$, where $A=\{a_1, a_2, \ldots, a_p\}$ and $B=\{b_1, b_2, \ldots, b_q\}$. If $A$ and $B$ are of equal sums of squares, then
	$$\sum_{1 \leq i < j \leq p} a_i a_j = \sum_{1 \leq i < j \leq q} b_i b_j.$$
	
\end{proposition}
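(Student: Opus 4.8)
The plan is to reduce everything to the single elementary identity relating the square of a sum to the sum of squares plus twice the sum of pairwise products. Concretely, for any finite list of numbers $a_1,\ldots,a_p$ one has
\begin{equation}
\left(\sum_{i=1}^{p} a_i\right)^2 = \sum_{i=1}^{p} a_i^2 + 2\sum_{1\leq i<j\leq p} a_i a_j,
\end{equation}
which is just the expansion of the square, collecting the $p$ diagonal terms and the $2\binom{p}{2}$ off-diagonal terms. The whole proposition will follow by applying this to both $A$ and $B$ and comparing.

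First I would use the defining property that $A$ and $B$ are both partitions of the same integer $n$, so that $\sum_{i=1}^{p} a_i = n = \sum_{k=1}^{q} b_k$, and hence their squares agree: $\left(\sum_i a_i\right)^2 = n^2 = \left(\sum_k b_k\right)^2$. Next I would invoke the hypothesis that $A$ and $B$ are of equal sums of squares, i.e. $\sum_{i=1}^{p} a_i^2 = \sum_{k=1}^{q} b_k^2$.

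Finally I would solve the displayed identity for the cross-term, writing
\begin{equation}
2\sum_{1\leq i<j\leq p} a_i a_j = \left(\sum_{i=1}^{p} a_i\right)^2 - \sum_{i=1}^{p} a_i^2 = n^2 - \sum_{i=1}^{p} a_i^2,
\end{equation}
and likewise $2\sum_{1\leq i<j\leq q} b_i b_j = n^2 - \sum_{k=1}^{q} b_k^2$. Since the two right-hand sides coincide by the previous two observations, the left-hand sides are equal, and dividing by $2$ yields the claim. There is no real obstacle here: the statement is a direct algebraic consequence of the two hypotheses, and the only thing to be careful about is bookkeeping the index ranges $p$ and $q$ separately for $A$ and $B$ so that the equality of total sums and of sums of squares is applied correctly.
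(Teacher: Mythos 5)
Your proof is correct and is exactly the standard argument the paper implicitly relies on (the paper states this proposition without proof, treating it as immediate): expand $\left(\sum_i a_i\right)^2 = \sum_i a_i^2 + 2\sum_{i<j} a_i a_j$, use that both partitions sum to $n$ and have equal sums of squares, and compare. Nothing is missing.
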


\begin{definition}
	Let $G$ be a graph with $V(G) = \{g_1, g_2, \ldots, g_n\}$.
	Let $S$ be a subset of $V(G)$, where $S=\{g_{k_1}, g_{k_2}, \ldots, g_{k_s}\}$, $1 \leq s \leq n$ and $1 \leq k_1 < \cdots < k_s \leq n$.
	Let $A =\{a_1, a_2, \ldots, a_p\}$ be a partition of a positive integer $t$, where $p \leq
	s$.
	Let $H_1, H_2, \ldots, H_t$ be $t$ graphs, where $V(H_i) =\{h_{i, 1}, h_{i, 2}, \ldots, h_{i, n_i} \}$ for $i = 1, 2, \ldots, t$.
	Let $\mathcal{H} = (H_1, H_2, \ldots, H_t)$ and $T=\{h_{1,t_1},\ldots,h_{t,t_t}\}$.
	The graph $G (S, A, \mathcal{H},T)$ is constructed from $G$ and $\mathcal{H}$ by identifying $g_{k_1}, h_{1, t_1}, h_{2, t_2}, \ldots, h_{a_1, t_{a_1}}$, identifying $g_{k_i}, h_{\sum_{j=1}^{i-1} a_j+1, t_{\sum_{j=1}^{i-1} a_j+1}}, h_{\sum_{j=1}^{i-1} a_j+2, t_{\sum_{j=1}^{i-1} a_j+2}}, \ldots$, $h_{\sum_{j=1}^{i} a_j, t_{\sum_{j=1}^{i} a_j}}$, where $i=2, 3, \ldots, p$.
	
\end{definition}

\begin{example}
	Let $G$ be a cycle of length 3 with vertex set $\{g_1,g_2,g_3\}$, $S=\{g_2, g_3\}$ and $A=\{3, 3\}$. Let $\mathcal{H} = (H_1, H_2, \ldots, H_6)$ and $T=\{h_{1,1},h_{2,1},\ldots,h_{t,1}\}$, where $H_i$ is a path of length $1$ with $V(H_i)=\{h_{i, 1}, h_{i, 2}\}$ for $i=1, 2, \ldots, 6$.
	The graph $G (S, A, \mathcal{H},T)$ is depicted in Figure \ref{fig:C3-3-3}.
\end{example}

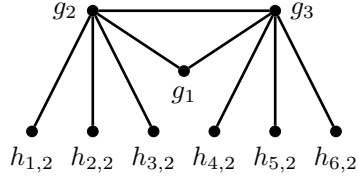
\begin{figure}[htp]
	\centering
	\tikzstyle{vertex}=[circle, draw, fill=black, inner sep=0, minimum size=4pt]%设置顶点格式
	\tikzstyle{line}=[line width=1pt,color=black]%设置边格式
	\begin{tikzpicture}[scale = 0.8]
	\node[vertex, label=below:$g_1$] (1) at (3.,2.5) {};
	\node[vertex, label=left:$g_2$] (2) at (1.5,3.5) {};
	\node[vertex, label=right:$g_3$] (3) at (4.5,3.5) {};
	\node[vertex, label=below:$h_{1, 2}$] (4) at (0.5,1.5) {};
	\node[vertex, label=below:$h_{2, 2}$] (5) at (1.5,1.5) {};
	\node[vertex, label=below:$h_{3, 2}$] (6) at (2.5,1.5) {};
	\node[vertex, label=below:$h_{4, 2}$] (7) at (3.5,1.5) {};
	\node[vertex, label=below:$h_{5, 2}$] (8) at (4.5,1.5) {};
	\node[vertex, label=below:$h_{6, 2}$] (9) at (5.5,1.5) {};
	
	\draw [line] (1) -- (2);
	\draw [line] (1) -- (3);
	\draw [line] (2) -- (3);
	\draw [line] (2) -- (4);
	\draw [line] (2) -- (5);
	\draw [line] (2) -- (6);
	\draw [line] (3) -- (7);
	\draw [line] (3) -- (8);
	\draw [line] (3) -- (9);
	\end{tikzpicture}
	\caption{$G (S, A, \mathcal{H},T)$.}\label{fig:C3-3-3}
\end{figure}

We denote by $\RSV (G, v)$ the multiset of the resistance distances between $v$ and other vertices of $G$,
that is, $\RSV (G, v)=\{R_G(v, u) \mid u \in V(G)\setminus \{v\}\}$.

For two nonempty multisets $A$ and $B$, both consisting of real numbers, we define the sum of $A$ and $B$ as the following multiset:
\begin{equation*}
A+B = \{a+b \mid a \in A, b \in B\}.
\end{equation*}

\begin{lemma}\cite{KR1993}\label{lemma:RDcut}
	Let $x$ be a cut vertex of a connected graph $G$. Let $u$ and $v$ be two vertices belonging to different components after $x$ is deleted from $G$. Then $R_G(u, v)=R_G(u, x)+R_G(x, v)$.
\end{lemma}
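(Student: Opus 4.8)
The plan is to prove this by a direct electrical-network argument, treating the statement as the \emph{series law} for effective resistance applied at a cut vertex. Since $x$ separates $u$ and $v$, I would write $G = G_1 \cup G_2$, where $G_1$ is the subgraph induced by $x$ together with the component of $G-x$ containing $u$, and $G_2$ is the subgraph induced by $x$ together with all the remaining vertices (so $v \in G_2$). These two pieces share exactly the vertex $x$, and every $u$--$v$ path in $G$ passes through $x$.

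First I would record the preliminary fact that $R_G(u,x) = R_{G_1}(u,x)$ and $R_G(x,v) = R_{G_2}(x,v)$. This holds because, when a unit current is injected at $u$ and withdrawn at $x$ (both lying in $G_1$), the piece $G_2$ is joined to the rest of the network only at the single vertex $x$ and contains neither the source nor the sink; such a branch must sit at constant potential and therefore carries no current, so it may be deleted without changing the effective resistance between $u$ and $x$. The symmetric statement yields the second equality.

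For the main step, let $\phi$ be the potential obtained by injecting a unit current at $u$ and extracting it at $v$, so that $\phi$ is harmonic (its value at each vertex equals the average of its neighbours' values, with unit conductances) at every vertex other than $u$ and $v$, and $R_G(u,v) = \phi(u) - \phi(v)$. Because $x$ is a cut vertex, the only current flowing from the $G_1$-side to the $G_2$-side passes through $x$; by conservation the total current leaving $x$ toward $G_2$ equals the single unit absorbed at the sink $v$. Hence the restriction of $\phi$ to $G_2$ is precisely the potential for a unit current entering at $x$ and leaving at $v$, giving $\phi(x) - \phi(v) = R_{G_2}(x,v)$, and likewise $\phi(u) - \phi(x) = R_{G_1}(u,x)$. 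Telescoping then gives
\begin{equation*}
R_G(u,v) = \phi(u) - \phi(v) = \bigl(\phi(u) - \phi(x)\bigr) + \bigl(\phi(x) - \phi(v)\bigr) = R_{G_1}(u,x) + R_{G_2}(x,v),
\end{equation*}
which equals $R_G(u,x) + R_G(x,v)$ by the preliminary fact.

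The step I expect to be the main obstacle is the rigorous justification that the restriction of the global flow $\phi$ to each piece is itself the correct single-source, single-sink solution on that piece: this rests on the uniqueness of the current flow (equivalently, of the harmonic function with the prescribed boundary behaviour) together with the vanishing of net current through the one-vertex interface, the same principle underlying the preliminary fact. An alternative would be an algebraic proof exploiting the block structure that the cut vertex induces in the graph Laplacian, via the Moore--Penrose formula $R_G(u,v) = (e_u - e_v)^{\top} L^{+}(e_u - e_v)$, or a probabilistic proof through additivity of commute times across the separating vertex $x$; but the flow argument above is the most transparent.
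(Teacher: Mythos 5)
Your proof is correct. Note that the paper itself gives no proof of this lemma --- it is quoted directly from Klein and Randi\'{c} \cite{KR1993} as a known result --- so there is nothing internal to compare against; your series-law argument (splitting $G$ at the cut vertex into $G_1$ and $G_2$, showing the dangling piece carries no current so that $R_G(u,x)=R_{G_1}(u,x)$, and then using current conservation at $x$ plus uniqueness of the harmonic potential to see that the global flow restricts to the correct unit flow on each piece) is the standard proof of this classical fact, and all the essential steps, including the one you flag as the main obstacle, are present and sound.
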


\begin{proposition}\label{proposition:1}
	Let $G_1, G_2, H_1$ and $H_2$ be four graphs. Let $V(G_i)=\{g_{i, 1}, \ldots, g_{i, n}\}$, $V(H_i)=\{h_{i, 1}, \ldots$, $h_{i, m}\}$, $S_i=\{g_{i, 1}\}$, $T_i=\{h_{i, 1}\}$, $\mathcal{H}_i=(H_i)$, where $i=1, 2$, and $A=\{1\}$.
	Let $G=G_1(S_1, A, \mathcal{H}_1, T_1)$ and $H=G_2(S_2, A, \mathcal{H}_2, T_2)$.
	If $\RS (G_1)=\RS(G_2)$, $\RSV (G_1, g_{1, 1})=\RSV (G_2, g_{2, 1})$, $\RS (H_1)=\RS (H_2)$ and $\RSV (H_1, h_{1, 1})=\RSV (H_2, h_{2, 1})$,
	then $\RSV (G, g_{1, 1})=\RSV (H, g_{2, 1})$ and $\RS (G)=\RS (H)$.
\end{proposition}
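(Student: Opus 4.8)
The plan is to reduce everything to a single structural decomposition of the resistance spectrum of a one-point gluing, and then invoke the four hypotheses term by term. First I would unwind the construction: since $A=\{1\}$ consists of a single part equal to $1$, the graph $G=G_1(S_1,A,\mathcal H_1,T_1)$ is obtained from $G_1$ and $H_1$ by identifying the single vertex $g_{1,1}$ with $h_{1,1}$; write $c$ for this common vertex. Because $G_1$ and $H_1$ meet only at $c$, the vertex $c$ is a cut vertex of $G$, and $V(G)=V(G_1)\cup V(H_1)$ with $V(G_1)\cap V(H_1)=\{c\}$. The identical description applies to $H$, with gluing vertex $g_{2,1}=h_{2,1}$.

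Next I would compute $\RS(G)$ by sorting the $\binom{|V(G)|}{2}$ vertex pairs into three classes according to where their endpoints lie. For a pair $u,v\in V(G_1)$ I claim $R_G(u,v)=R_{G_1}(u,v)$: the subgraph $H_1$ is attached to the rest of $G$ only through $c$, so when a unit current is driven between $u$ and $v$ no current enters $H_1\setminus\{c\}$, and the effective resistance is unchanged; hence these pairs contribute exactly $\RS(G_1)$. Symmetrically, pairs $u,v\in V(H_1)$ contribute $\RS(H_1)$. For a cross pair with $u\in V(G_1)\setminus\{c\}$ and $v\in V(H_1)\setminus\{c\}$, Lemma \ref{lemma:RDcut} applies to the cut vertex $c$ and gives $R_G(u,v)=R_G(u,c)+R_G(c,v)=R_{G_1}(u,c)+R_{H_1}(c,v)$, the last equality again by the internal-pair claim; as $u,v$ range over the two sides these values run through the multiset sum $\RSV(G_1,g_{1,1})+\RSV(H_1,h_{1,1})$. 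A short count (the shared vertex $c$ is included once, inside the first two classes) confirms that the three classes partition all pairs, so
$$\RS(G)=\RS(G_1)\uplus\RS(H_1)\uplus\big(\RSV(G_1,g_{1,1})+\RSV(H_1,h_{1,1})\big),$$
where $\uplus$ denotes multiset union (union with multiplicities). The same argument on $H$ yields $\RS(H)=\RS(G_2)\uplus\RS(H_2)\uplus\big(\RSV(G_2,g_{2,1})+\RSV(H_2,h_{2,1})\big)$. Restricting the same bookkeeping to the pairs containing $c$ gives the vertex versions $\RSV(G,g_{1,1})=\RSV(G_1,g_{1,1})\uplus\RSV(H_1,h_{1,1})$ and likewise for $H$.

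Finally I would invoke the hypotheses $\RS(G_1)=\RS(G_2)$, $\RS(H_1)=\RS(H_2)$, $\RSV(G_1,g_{1,1})=\RSV(G_2,g_{2,1})$ and $\RSV(H_1,h_{1,1})=\RSV(H_2,h_{2,1})$. Since the operations $\uplus$ and $+$ on multisets depend only on their operand multisets, substituting these equalities into the two decompositions makes them coincide, which proves both $\RSV(G,g_{1,1})=\RSV(H,g_{2,1})$ and $\RS(G)=\RS(H)$.

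I expect the main obstacle to be the internal-pair claim $R_G(u,v)=R_{G_1}(u,v)$ together with the careful pair-counting that shows no pair is missed or double counted, especially handling the shared cut vertex $c$. Everything after the decomposition is a mechanical substitution. The ``no current through a pendant attached at a cut vertex'' fact can itself be derived from Lemma \ref{lemma:RDcut} (or directly from Kirchhoff's laws), so no machinery beyond what the excerpt provides is required.
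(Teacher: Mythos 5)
Your proposal is correct and follows essentially the same route as the paper: decompose $\RS(G)$ and $\RSV(G,g_{1,1})$ via the cut vertex created by the identification, using Lemma \ref{lemma:RDcut}, into $\RS(G_1)\cup\RS(H_1)\cup(\RSV(G_1,g_{1,1})+\RSV(H_1,h_{1,1}))$ and $\RSV(G_1,g_{1,1})\cup\RSV(H_1,h_{1,1})$ respectively, then substitute the four hypothesized equalities. The only difference is that you spell out the details the paper leaves to ``a simple examination'' (invariance of internal resistances and the pair count), which is a welcome but not substantively different elaboration.
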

\begin{proof}
	
	By Lemma \ref{lemma:RDcut}, we have
	\begin{gather*}
	\RS (G) =\RS (G_1) \cup \RS (H_1) \cup (\RSV (G_1, g_{1, 1})+ \RSV (H_1, h_{1, 1})),\\
	\RS (H) =\RS (G_2) \cup \RS (H_2) \cup (\RSV (G_2, g_{2, 1})+ \RSV (H_2, h_{2, 1})),\\
	\RSV (G, g_{1, 1}) = \RSV (G_1, g_{1, 1}) \cup \RSV (H_1, h_{1, 1})
	\end{gather*}
	and
	$$\RSV (H, g_{2, 1}) = \RSV (G_2, g_{2, 1}) \cup \RSV (H_2, h_{2, 1}).$$
	The results can be reached by a simple examination.
\end{proof}

For two graphs $G$ and $H$, if $\RS (G)=\RS (H)$ and there is a vertex $g$ of $G$ and a vertex $h$ of $H$, such that $\RSV (G, g)=\RSV (H, h)$, then we say that $G$ and $H$ hold relation $\mathcal{U}$ with respect to vertices $g$ and $h$, denoted by $G$-$g$-$\mathcal{U}$-$h$-$H$. Sometimes, we say that $G$ holds relation $\mathcal{U}$ with $H$ instead for simplicity while $G$-$g$-$\mathcal{U}$-$h$-$H$ is abbreviated as $G\mathcal{U}H$.

%The relation $\mathcal{U}$ is an equivalence relation, since it satisfy the following properties:\\
%(1) Reflexivity: $G \mathcal{U} G$.\\
%(2) Symmetry: If $G \mathcal{U} H$, then $H \mathcal{U} G$.\\
%(3) Transitivity: If $G \mathcal{U} H$ and $H \mathcal{U} F$, then $G \mathcal{U} F$.

\begin{example} \label{exam:T1T2}
	If $T_1$ and $T_2$ are two graphs of $9$ verteices as shown in Figure \ref{fig:T1T2}, then $T_1$ holds relation $\mathcal{U}$ with $T_2$.
	
	By a simple calculation,
	$\RS (T_1)=\RS (T_2) =\{ [5]^1, [4]^{3}, [\frac{15}{4}]^{2}, [3]^5, [\frac{11}{4}]^4,[2]^6,[\frac{7}{4}]^4,[1]^7,[\frac{3}{4}]^4 \}$ and\\
	$\RSV (T_1, t_{1, 3})=\RSV (T_2, t_{2, 3}) =\{\frac{15}{4}, \frac{11}{4}, \frac{11}{4}, \frac{7}{4},\frac{7}{4} , 1, \frac{3}{4}, \frac{3}{4}\}$.
\end{example}
\begin{figure}[htpb!]
	\centering
	\tikzstyle{vertex2}=[circle, draw, fill=black, inner sep=0, minimum size=4pt]
	\tikzstyle{line}=[line width=1pt,color=black]%设置边格式
	\begin{subfigure}[b]{0.49\textwidth}  %
		\centering
		\begin{tikzpicture}
		\node[vertex2, label=below:$t_{1, 2}$] (1) at (0, 0) {};
		\node[vertex2, label=below:$t_{1, 3}$] (2) at (1,1) {};
		\node[vertex2, label=below:$t_{1, 4}$] (3) at (1,-1) {};
		\node[vertex2, label=below:$t_{1, 5}$] (4) at (2, 0) {};
		\node[vertex2, label=below:$t_{1, 6}$] (5) at (3, 1) {};
		\node[vertex2, label=below:$t_{1, 7}$] (6) at (3, -1) {};
		\node[vertex2, label=below:$t_{1, 8}$] (7) at (4, 1) {};
		\node[vertex2, label=below:$t_{1, 9}$] (8) at (4, -1) {};
		\node[vertex2, label=below:$t_{1, 1}$] (9) at (5, -1) {};
		
		\draw [line] (1) -- (2);
		\draw [line] (1) -- (3);
		\draw [line] (2) -- (4);
		\draw [line] (3) -- (4);
		\draw [line] (4) -- (5);
		\draw [line] (4) -- (6);
		\draw [line] (5) -- (7);
		\draw [line] (6) -- (8);
		\draw [line] (8) -- (9);
		\end{tikzpicture}
		\caption{$T_1$}
	\end{subfigure}
	\begin{subfigure}[b]{0.49\textwidth}
		\centering
		\begin{tikzpicture}
		\node[vertex2, label=below:$t_{2, 2}$] (1) at (0, 1) {};
		\node[vertex2, label=below:$t_{2, 1}$] (2) at (1, 1) {};
		\node[vertex2, label=right:$t_{2, 3}$] (3) at (2, 2) {};
		\node[vertex2, label=below:$t_{2, 4}$] (4) at (2, 0) {};
		\node[vertex2, label=below:$t_{2, 5}$] (5) at (3, 1) {};
		\node[vertex2, label=right:$t_{2, 6}$] (6) at (4, 1) {};
		\node[vertex2, label=right:$t_{2, 7}$] (7) at (5, 2) {};
		\node[vertex2, label=below:$t_{2, 8}$] (8) at (5, 0) {};
		\node[vertex2, label=below:$t_{2, 9}$] (9) at (6, 0) {};
		
		\draw [line] (1) -- (2);
		\draw [line] (2) -- (3);
		\draw [line] (2) -- (4);
		\draw [line] (3) -- (5);
		\draw [line] (4) -- (5);
		\draw [line] (5) -- (6);
		\draw [line] (6) -- (7);
		\draw [line] (6) -- (8);
		\draw [line] (8) -- (9);
		\end{tikzpicture}
		\caption{$T_2$}
	\end{subfigure}
	\caption{Two graphs $T_1$ and $T_2$ holding the relation $\mathcal{U}$.}
	\label{fig:T1T2}
\end{figure}
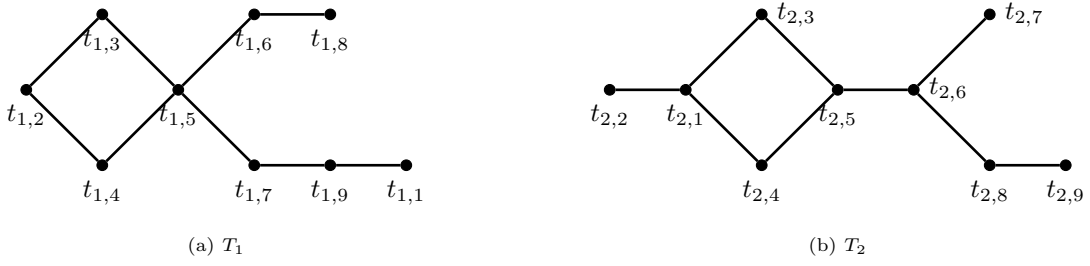

\begin{proposition} \label{prop:GSAFGSAH}
	Let $G$ be a graph with vertex set $\{g_1, g_2, \ldots, g_n\}$, and $S=\{g_{k_1}, g_{k_2}, \ldots, g_{k_s}\}$, where $1 \leq s \leq n$ and $1 \leq k_1 < \cdots < k_s \leq n$.
	Let $F_1, F_2, \ldots, F_t,
	H_1, H_2, \ldots, H_t$ be graphs with $F_i$-$f_{i,1}$-$ \mathcal{U}$-$h_{i,1}$-$H_i$,  $f_{i,1}\in V(F_i) $ and $h_{i,1}\in V(H_i)$ for $i=1, 2, \ldots, t$. Let $\mathcal{F}=(F_1, F_2, \ldots, F_t)$, $\mathcal{H}=(H_1, H_2, \ldots, H_t)$, $T_1=\{f_{1,1},\ldots,f_{t,1}\}$ and $T_2=\{h_{1,1},\ldots,h_{t,1}\}$. Let $A$ be a partition of positive integer $t$,
	where $A=\{a_1, a_2, \ldots, a_p\}$, $1 \leq p \leq s \leq n$. Then $G (S, A, \mathcal{F}, T_1)$ and $G (S, A, \mathcal{H},T_2)$ have the same resistance spectrum.
\end{proposition}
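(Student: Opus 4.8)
The plan is to compute the resistance spectrum of $\Gamma_F := G(S, A, \mathcal{F}, T_1)$ directly from its block structure, to express every contribution in terms of data that the relation $\mathcal{U}$ preserves, and then to observe that the analogous computation for $\Gamma_H := G(S, A, \mathcal{H}, T_2)$ yields term-by-term identical multisets. The whole argument rests on Lemma~\ref{lemma:RDcut}: in each construction the vertices $g_{k_1}, \ldots, g_{k_p}$ are cut vertices, so the resistance distance between two vertices lying in different glued pieces splits additively at these cut vertices.

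First I would fix notation for the gluing. For $i = 1, \ldots, p$ put $B_i = \{a_1 + \cdots + a_{i-1} + 1, \ldots, a_1 + \cdots + a_i\}$, the set of indices $\ell$ for which $F_\ell$ (resp. $H_\ell$) is attached to $G$ by identifying $f_{\ell,1}$ (resp. $h_{\ell,1}$) with $g_{k_i}$; thus $|B_i| = a_i$. In $\Gamma_F$ the vertex set is the disjoint union of $V(G)$ with the interiors $V(F_\ell) \setminus \{f_{\ell,1}\}$, and each $g_{k_i}$ carrying a branch is a cut vertex. A standard consequence of Lemma~\ref{lemma:RDcut} (no current enters a branch hung at a cut vertex) is that resistance distances internal to a single piece are unaffected by the gluing: $R_{\Gamma_F}(x,y) = R_G(x,y)$ for $x, y \in V(G)$, and $R_{\Gamma_F}(x,y) = R_{F_\ell}(x,y)$ for $x, y \in V(F_\ell)$.

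Next I would split the unordered pairs of $V(\Gamma_F)$ into four types and read off the corresponding sub-multiset of $\RS(\Gamma_F)$. (i) Both endpoints in $V(G)$ gives $\RS(G)$. (ii) Both endpoints in the interior of a single $F_\ell$ gives $\RS(F_\ell)$ with the pairs meeting $f_{\ell,1}$ removed, i.e. $\RS(F_\ell)$ minus $\RSV(F_\ell, f_{\ell,1})$. (iii) One endpoint $x \in V(G)$ and one endpoint $y$ in the interior of $F_\ell$ with $\ell \in B_i$: by Lemma~\ref{lemma:RDcut}, $R_{\Gamma_F}(x,y) = R_G(x, g_{k_i}) + R_{F_\ell}(f_{\ell,1}, y)$, so these pairs form $(\RSV(G, g_{k_i}) \cup \{0\}) + \RSV(F_\ell, f_{\ell,1})$, the $0$ accounting for $x = g_{k_i}$. (iv) One endpoint in the interior of $F_\ell$ and one in the interior of $F_m$ with $\ell \neq m$, say $\ell \in B_i$ and $m \in B_{i'}$: splitting at the cut vertex $g_{k_i}$ (and at $g_{k_{i'}}$ when $i \neq i'$) gives $R_{\Gamma_F}(x,y) = R_{F_\ell}(f_{\ell,1}, x) + R_G(g_{k_i}, g_{k_{i'}}) + R_{F_m}(f_{m,1}, y)$, the middle term vanishing when $i = i'$; hence, over each unordered pair $\{\ell, m\}$, these pairs form $\RSV(F_\ell, f_{\ell,1}) + \{R_G(g_{k_i}, g_{k_{i'}})\} + \RSV(F_m, f_{m,1})$. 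In every one of the four contributions the only ingredients are resistance distances inside $G$ and among the attachment vertices $g_{k_i}$, which are identical for $\Gamma_F$ and $\Gamma_H$ since $G$, $S$ and $A$ are shared, together with the invariants $\RS(F_\ell)$ and $\RSV(F_\ell, f_{\ell,1})$, which by $F_\ell$-$f_{\ell,1}$-$\mathcal{U}$-$h_{\ell,1}$-$H_\ell$ equal $\RS(H_\ell)$ and $\RSV(H_\ell, h_{\ell,1})$. Performing the identical decomposition for $\Gamma_H$ therefore yields the same four sub-multisets, and taking their union gives $\RS(\Gamma_F) = \RS(\Gamma_H)$.

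I expect the main obstacle to be the bookkeeping in type (iv): one must handle branches attached at the same cut vertex ($i = i'$) and at distinct cut vertices ($i \neq i'$) uniformly, keep the two interiors of a pair distinct so that no unordered pair is counted twice, and confirm that each shared attachment vertex $g_{k_i} = f_{\ell,1}$ is accounted for exactly once across types (i)--(iii). As a safeguard I would cross-check the statement by induction on $t$, attaching the graphs one at a time and applying a suitably bookkept version of Proposition~\ref{proposition:1}, which already records both $\RS$ and the pointed invariant $\RSV(\cdot,\cdot)$ needed to carry the induction through; the base case $t = 1$, $p = 1$ is exactly Proposition~\ref{proposition:1}.
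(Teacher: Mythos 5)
Your argument is correct, but it takes a genuinely different route from the paper's. The paper's own proof is two lines: from $F_i$-$f_{i,1}$-$\mathcal{U}$-$h_{i,1}$-$H_i$ it reads off $\RS(F_i)=\RS(H_i)$ and $\RSV(F_i,f_{i,1})=\RSV(H_i,h_{i,1})$, and then invokes Proposition~\ref{proposition:1} ``repeatedly'' --- in effect swapping $F_j$ for $H_j$ one branch at a time, each swap being an application of Proposition~\ref{proposition:1} in which the two base graphs $G_1=G_2$ are taken to be the \emph{identical} hybrid graph carrying $H_1,\ldots,H_{j-1},F_{j+1},\ldots,F_t$, so that the hypotheses $\RS(G_1)=\RS(G_2)$ and $\RSV(G_1,\cdot)=\RSV(G_2,\cdot)$ hold trivially at whichever attachment vertex is used next. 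That iterative scheme is exactly what you relegate to a ``safeguard'' cross-check in your last paragraph. Your primary argument instead computes the resistance spectrum of $G(S,A,\mathcal{F},T_1)$ outright, partitioning the unordered vertex pairs into four types and expressing each sub-multiset purely in terms of $\RS(G)$, $\RSV(G,g_{k_i})$, the distances $R_G(g_{k_i},g_{k_{i'}})$, and the invariants $\RS(F_\ell)$, $\RSV(F_\ell,f_{\ell,1})$, all of which are unchanged when $F_\ell$ is replaced by $H_\ell$; the bookkeeping (the $\{0\}$ term covering $x=g_{k_i}$ in type (iii), the vanishing middle term when two branches share an attachment vertex in type (iv), and the disjointness of the four types) is handled correctly. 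What your route buys is an explicit closed-form description of the spectrum of the glued graph, which is precisely the kind of statement the paper later asserts without proof as Lemma~\ref{lemma:2} (``The proof is obvious'') in the $S$-resistance-transitive special case --- your decomposition would substantiate that lemma as well. What the paper's route buys is brevity, since all bookkeeping is hidden inside the already-proved Proposition~\ref{proposition:1}. One caveat common to both: Lemma~\ref{lemma:RDcut} is stated for connected graphs, and both arguments also use the (standard, but unproved here) fact that hanging a branch at a cut vertex does not change resistances inside each piece; neither you nor the paper addresses the disconnected/$+\infty$ case explicitly, so you are not less rigorous than the source on this point.
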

\begin{proof}
	Since $F_1, F_2, \ldots, F_t, H_1, H_2, \ldots, H_t$ be graphs with $F_i$-$f_{i,1}$-$\mathcal{U}$-$h_{i,1}$-$H_i$ for $i=1, 2, \ldots, t$, we have $\RS (H_i)=\RS (F_i), i=1, 2, \ldots, t$, and $\RSV(H_i, h_{i, 1})=\RSV(F_i, f_{i, 1})$.
	Then by Proposition \ref{proposition:1}, repeatedly, we have\\
	$$\RS (G (S, A, \mathcal{F}, T_1))=\RS (G (S, A, \mathcal{H}, T_2)).$$
\end{proof}

\begin{theorem}
	For any positive integer $k$, there exist at least $2^k$ graphs with the same resistance spectrum.
\end{theorem}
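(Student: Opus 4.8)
The plan is to realise the required family as the outputs of the construction $G(S,A,\mathcal{H},T)$ for one fixed base graph and one fixed partition, letting only the choice of gadgets vary. For definiteness I would take the base $G$ to be a spider whose centre $c$ is joined to $k$ legs of pairwise distinct lengths $1,2,\dots,k$, and let $v_i$ be the leaf of the $i$-th leg. Put $S=\{v_1,\dots,v_k\}$ and let $A=\{1,1,\dots,1\}$ be the partition of $t=k$ into $k$ parts, so that exactly one gadget is identified with each $v_i$; note $p=s=k\le n=|V(G)|$, as required by the construction. Writing $T_1,T_2$ for the two nine-vertex graphs of Example \ref{exam:T1T2}, for each string $b=(b_1,\dots,b_k)\in\{1,2\}^{k}$ set $\mathcal{H}_b=(T_{b_1},\dots,T_{b_k})$ and let $G_b$ be the graph obtained by identifying $v_i$ with the degree-two vertex $t_{b_i,3}$ of the $i$-th gadget, this being precisely the vertex at which $\mathcal{U}$ is witnessed in Example \ref{exam:T1T2}.

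The equal-spectrum half is then immediate. Fix two strings $b,b'$. For each index $i$ one has $T_{b_i}$-$t_{b_i,3}$-$\mathcal{U}$-$t_{b'_i,3}$-$T_{b'_i}$: if $b_i=b'_i$ this is the trivial instance of $\mathcal{U}$ that every graph holds with itself at a common vertex, and if $b_i\neq b'_i$ it is exactly $T_1\,\mathcal{U}\,T_2$ from Example \ref{exam:T1T2}, read in whichever of the two directions is needed. Hence Proposition \ref{prop:GSAFGSAH}, applied with $\mathcal{F}=\mathcal{H}_b$ and $\mathcal{H}=\mathcal{H}_{b'}$ (rooted at the respective $t_{\cdot,3}$ vertices), yields $\RS(G_b)=\RS(G_{b'})$. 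Since $b,b'$ were arbitrary, all $2^k$ graphs $\{G_b : b\in\{1,2\}^{k}\}$ share a single resistance spectrum.

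What remains, and what I expect to be the genuine obstacle, is to show that distinct strings yield \emph{non-isomorphic} graphs, so that the family really has $2^k$ members. Here I would exploit the rigidity of the base. For $k$ exceeding the maximum degree occurring in $T_1$ and $T_2$, the centre $c$ is the unique vertex of maximum degree in every $G_b$ (attachment raises a leaf to degree three, and the gadget vertices keep their bounded degrees), so any automorphism of $G_b$, and any isomorphism $\phi\colon G_b\to G_{b'}$, must fix $c$. It then permutes the legs while preserving their lengths, and because the lengths $1,\dots,k$ are distinct it fixes each leaf $v_i$; cutting the edge of leg $i$ incident to $v_i$ exhibits the attached gadget as a rooted branch at $v_i$, so $\phi$ carries the gadget rooted at $v_i$ in $G_b$ to the gadget rooted at $v_i$ in $G_{b'}$. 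This forces $(T_{b_i},t_{b_i,3})\cong (T_{b'_i},t_{b'_i,3})$ and hence $b_i=b'_i$ for every $i$, i.e.\ $b=b'$. The crux is the purely combinatorial input that $T_1\not\cong T_2$, which I would read off directly from the two drawings in Figure \ref{fig:T1T2}.

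Finally, small $k$ needs only a remark: the maximum-degree step requires $k$ to exceed the degrees appearing in $T_1,T_2$, but this is harmless, since exhibiting $2^{k}$ graphs with a common spectrum for all large $k$ already furnishes at least $2^{k_0}$ such graphs for every smaller $k_0$ (simply discard the surplus). Thus the statement holds for every positive integer $k$, with the entire spectral content supplied by Proposition \ref{prop:GSAFGSAH} and only the non-isomorphism bookkeeping requiring care.
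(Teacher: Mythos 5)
Your proof is correct and takes essentially the same approach as the paper: both attach copies of the gadgets $T_1$, $T_2$ from Example \ref{exam:T1T2}, indexed by the $2^k$ vectors in $\{1,2\}^k$, to $k$ distinguished vertices of a rigid base graph via the partition $A=\{1,1,\dots,1\}$, and invoke Proposition \ref{prop:GSAFGSAH} to get equality of the resistance spectra. The only differences are cosmetic: the paper's base graph is a long path with ends of distinct lengths (non-isomorphism via its unique longest path), whereas yours is a spider with legs of distinct lengths (non-isomorphism via the unique maximum-degree centre, argued in somewhat more detail than the paper's), and your reduction of small $k$ to large $k$ plays the role of the paper's free choice of the parameters $p,q$.
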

\begin{proof}
	Let $L_k$ denote the set of all $k$-dimensional row vectors consisting of elements $1$ or $2$. Clearly, $|L_k| = 2^k$.
	Let $T_1$ and $T_2$ be defined as in Example \ref{exam:T1T2}.
	Let $G_{k,p,q} = u_1 u_2 \cdots u_p v_1 v_2 \cdots v_k w_1 w_2 \cdots w_q $ be a path of length $k+p+q-1$, where $q \geq p+1 \geq 8$. Let $S = \{v_1, v_2, \ldots, v_k\}$ and $A=\{[1]^k\}$.
	For any element $I$ in $L_k$, let $I = (I_1, I_2, \ldots, I_k)$, $\mathcal{H_I} = (H_1, H_2, \ldots, H_k)$, where $H_i$ is isomorphic to $T_{I_i}$ and the vertex $h_{i,1}$ is identical to $T_{I_i,3}$ under some isomorphism $\theta$ between $H_i$ and $T_{I_i}$. 
Let $T=\{h_{1,1}, h_{2,1}, \ldots, h_{t,1}\}$ and $G_{k,p,q}^I = G_{k,p,q}(S,A,\mathcal{H_I}, T)$.
	When $k=2, p=7, q=8$ and $I =(1,2)$, $G_{k,p,q}^I$ is shown in Figure \ref{fig:G25612}.
	Note that $G_{k,p,q}^I$ has a unique longest path of length $k+p+q-1$. It follows easily that $G_{k,p,q}^I$ and $G_{k,p,q}^J$ are not isomorphic for any two different elements $I$ and $J$ of $L_k$. By Proposition \ref{prop:GSAFGSAH}, $G_{k,p,q}^I$ and $G_{k,p,q}^J$ have the same resistance spectrum. Therefore, we can find $2^k$ graphs with the same resistance spectrum.
\end{proof}
\begin{figure}[htbp!]
	\centering
	\tikzstyle{vertex}=[circle, draw, fill=black, inner sep=0, minimum size=4pt]%设置顶点格式
	\tikzstyle{line}=[line width=1pt,color=black]%设置边格式
	\begin{tikzpicture}[scale=1]
        \draw [line] (6,2)-- (5,2);
        \draw [line] (6,2)-- (7,2);
        \draw [line] (8,1)-- (8,2);
        \draw [line] (8,1)-- (7,2);
        \draw [line] (7,1)-- (6,1);
        \draw [line] (7,1)-- (7,2);
        \draw [line] (5,1)-- (6,1);
        \draw [line] (11,1)-- (12,1);
        \draw [line] (11,1)-- (11,2);
        \draw [line] (10,1)-- (10,2);
        \draw [line] (10,1)-- (9,2);
        \draw [line] (9,1)-- (9,2);
        \draw [line] (12,2)-- (11,2);
        \draw [line] (10,2)-- (11,2);
        \draw [line] (1,3)-- (2,3);
        \draw [line] (2,3)-- (3,3);
        \draw [line] (3,3)-- (4,3);
        \draw [line] (4,3)-- (5,3);
        \draw [line] (5,3)-- (6,3);
        \draw [line] (6,3)-- (7,3);
        \draw [line] (7,3)-- (8,3);
        \draw [line] (8,3)-- (9,3);
        \draw [line] (9,3)-- (10,3);
        \draw [line] (10,3)-- (11,3);
        \draw [line] (11,3)-- (12,3);
        \draw [line] (12,3)-- (13,3);
        \draw [line] (13,3)-- (14,3);
        \draw [line] (14,3)-- (15,3);
        \draw [line] (15,3)-- (16,3);
        \draw [line] (16,3)-- (17,3);
        \draw [line] (8,3)-- (8,2);
        \draw [line] (8,3)-- (7,2);
        \draw [line] (9,2)-- (9,3);
        \draw [line] (10,2)-- (9,3);
        
        \node[vertex] at (6,2) {};
        \node[vertex] at (8,1) {};
        \node[vertex] at (7,1) {};
        \node[vertex] at (5,1) {};
        \node[vertex] at (5,2) {};
        \node[vertex] at (8,2) {};
        \node[vertex] at (6,1) {};
        \node[vertex] at (7,2) {};
        \node[vertex] at (11,1) {};
        \node[vertex] at (10,1) {};
        \node[vertex] at (9,1) {};
        \node[vertex] at (12,2) {};
        \node[vertex] at (12,1) {};
        \node[vertex] at (10,2) {};
        \node[vertex] at (9,2) {};
        \node[vertex] at (11,2) {};
        \node[vertex] at (1,3) {};
        \node[vertex] at (2,3) {};
        \node[vertex] at (3,3) {};
        \node[vertex] at (4,3) {};
        \node[vertex] at (5,3) {};
        \node[vertex] at (6,3) {};
        \node[vertex] at (7,3) {};
        \node[vertex] at (8,3) {};
        \node[vertex] at (9,3) {};
        \node[vertex] at (10,3) {};
        \node[vertex] at (11,3) {};
        \node[vertex] at (12,3) {};
        \node[vertex] at (13,3) {};
        \node[vertex] at (14,3) {};
        \node[vertex] at (15,3) {};
        \node[vertex] at (16,3) {};
        \node[vertex] at (17,3) {};

	\end{tikzpicture}
	\caption{$G_{2,7,8}^{I}$}\label{fig:G25612}
\end{figure}
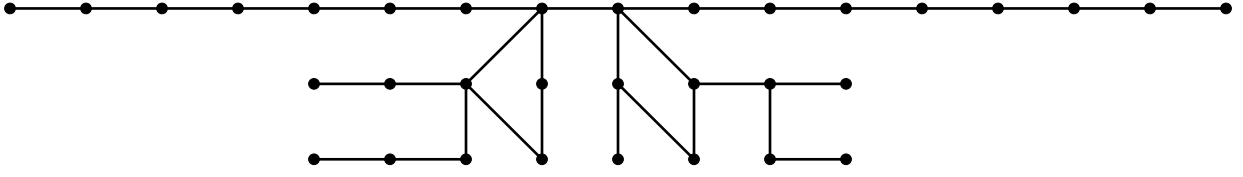

\begin{definition} [$S$-\textit{resistance transitive}]
	Let $G$ be a graph with vertex set $\{g_1, g_2, \ldots, g_n\}$.
	Let $S=\{g_{k_1}, g_{k_2}, \ldots, g_{k_s}\}$ and $T=V (G) \setminus S$, where $3 \leq s \leq n$ and $1 \leq k_1 < \cdots < k_s \leq n$.
	
	Then $G$ is $S$-\textit{resistance transitive} if the following properties are satisfied:\\
	(1) Let $u$ and $v$ be any two vertices in $S$. If $T$ is not an empty set, then for each vertex $x$ in $T$, $R_G (x, u) = R_G (x, v)$.\\
	(2) For each pair of different vertices $u$ and $v$ in $S$, the resistance distance between them equal to the same value.
\end{definition}

\begin{example}
	Let $G$ be a complete graph $K_4$, $G^{\prime}$ an empty graph $\overline {K_4}$, $V(G)=\{g_1, g_2, g_3, g_4\}$ and $V(G^{\prime})=\{g_1^{\prime}, g_2^{\prime}, g_3^{\prime}, g_4^{\prime}\}$.
	Let $S=\{g_1, g_2, g_3\}, T=\{g_4\}, S^{\prime}=\{g_1^{\prime}, g_2^{\prime}, g_3^{\prime}\}, T^{\prime}=\{g_4^{\prime}\}$. Clearly, $R(g_i, g_j) =\frac{1}{2}$ and $R(g_i^{\prime}, g_j^{\prime}) =+\infty$ for $1 \leq i \neq j \leq 4$.
	Thus, $G$ and $G^{\prime}$ are $S$-resistance transitive and $S^{\prime}$-resistance transitive, respectively.
	
\end{example}

%%%%%%%%%%%%%%%%%%%%%%%%%%%%%%%%%%%%%%%%%%%%%%%%%%%%%
\section{Main results}\label{sec:Main}
%%%%%%%%%%%%%%%%%%%%%%%%%%%%%%%%%%%%%%%%%%%%%%%%%%%%%

\begin{lemma}\label{lemma:2}
	Let $G$ be a $S$-resistance transitive graph with $V(G)=\{g_1, g_2, \ldots, g_{n_1}\}$, $S=\{g_1, g_2, \ldots, g_s\}$ and $T=\{g_{s+1}, g_{s+2}, \ldots, g_{n_1}\}$.
	There exists a constant $c$ such that for any two vertices $s_1 \neq s_2 \in S, $
	$R_G (s_1, s_2)=c$.
	Let $H_1, H_2, \ldots, H_t$ be $t$ graphs with $V(H_i) =\{h_{i, 1}, h_{i, 2}, \ldots, h_{i, n_i} \}$ and $H_i \mathcal{U} H_j$ for $i,j \in \{1, 2, \ldots, t\}$. Assume $\RSV(H_i, h_{i,1}) = \RSV(H_j,h_{j,1})$.
	Let $\mathcal{H} = (H_1, H_2, \ldots, H_t)$ and $T_1=\{h_{1,1},h_{2,1},\ldots,h_{t,1}\}$.
	Let $A=\{a_1, a_2, \ldots, a_p\}$ be a partition of a positive integer $t, $ where $p \leq s$.
	
	Then the resistance spectrum of $G (S, A, \mathcal{H}, T_1)$ is
\begin{align*}
	\RS (G)
	\bigcup t\cdot \RS (H_1)
	&\bigcup \{[\RSV (H_1,h_{1,1})+\RSV (H_1,h_{1,1})]^{\sum_{i=1}^{p} \frac{a_i (a_i-1)} {2}} \}\\
	&\bigcup \{ [\RSV (H_1,h_{1,1})+\RSV (H_1,h_{1,1})+c]^{\sum_{1 \leq i < j \leq p} a_i a_j} \}\\
	&\bigcup \{ [\RSV (H_1, h_{1,1})+c]^{ts-t} \} \bigcup \left\{\left[ R_{G}(g_{i}, g_{1})+\RSV (H_1, h_{1,1}) \right]^t \mid i=s+1, \ldots, n \right\}.
\end{align*}
\begin{proof}
	The proof is obvious.
\end{proof}
\end{lemma}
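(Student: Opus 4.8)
The plan is to compute $\RS(G(S,A,\mathcal H,T_1))$ directly: I would partition the unordered pairs of vertices of $\widetilde G:=G(S,A,\mathcal H,T_1)$ according to where their two endpoints lie, evaluate each class with the cut-vertex additivity of Lemma~\ref{lemma:RDcut}, and then check that the resulting multisets reassemble into the stated expression. First I would record two preliminaries. Because $H_i\,\mathcal U\,H_j$ holds for all $i,j$ and $\RSV(H_i,h_{i,1})=\RSV(H_j,h_{j,1})$ by hypothesis, we get $\RS(H_i)=\RS(H_1)$ and a single common multiset $R_0:=\RSV(H_1,h_{1,1})$; in particular all the $H_i$ have the same number of vertices, so every copy of $R_0$ below has the same cardinality and the multiset sums are well defined. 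Second, in the construction each attachment vertex $g_i$ with $1\le i\le p$ is a cut vertex of $\widetilde G$, whose deletion separates the interiors $V(H_\ell)\setminus\{h_{\ell,1}\}$ of the branches hung at $g_i$ from one another and from the rest of $\widetilde G$; hence Lemma~\ref{lemma:RDcut} governs every pair whose connecting path must traverse one or two of these cut vertices.

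Next I would split the pairs into the following disjoint classes. (A) Both endpoints in $V(G)$: the branches create no new paths among vertices of $G$, so resistances are unchanged and this class yields $\RS(G)$. (B) Both endpoints inside a single branch $H_\ell$, with its root $h_{\ell,1}$ identified with its attachment vertex: resistances inside $H_\ell$ are preserved, and summing over the $t$ branches gives $t\cdot\RS(H_1)$; this already absorbs every pair of the form (attachment vertex, interior vertex of a branch hung there), since those are exactly the root-to-other entries $\RSV(H_\ell,h_{\ell,1})$ contained in $\RS(H_\ell)$. (C) Two interior vertices of distinct branches hung at the same $g_i$: by Lemma~\ref{lemma:RDcut} each pair has resistance $r'+r''$ with $r',r''\in R_0$, so this class contributes the multiset $R_0+R_0$; there are $\binom{a_i}{2}=\tfrac{a_i(a_i-1)}2$ unordered pairs of branches at $g_i$, and summing over $i$ produces the exponent $\sum_{i=1}^p\frac{a_i(a_i-1)}2$. (D) Two interior vertices of branches hung at distinct $g_i,g_j$ with $i\ne j$: the path now crosses both cut vertices, so the resistance is $r'+R_G(g_i,g_j)+r''=r'+c+r''$ (using $R_G(g_i,g_j)=c$ since both lie in $S$), giving $R_0+R_0+c$; the number of such (branch at $g_i$, branch at $g_j$) pairs is $\sum_{1\le i<j\le p}a_ia_j$.

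Finally I would treat the mixed pairs with one endpoint $x\in V(G)$ and the other an interior vertex of a branch $H_\ell$ hung at $g_i$, with $x\ne g_i$. By Lemma~\ref{lemma:RDcut} the resistance equals $R_G(x,g_i)+r$ with $r\in R_0$. If $x=g_{i'}\in S$ with $i'\ne i$, then $R_G(x,g_i)=c$, and there is one copy of $R_0+c$ for each of the $t$ branches and each of the $s-1$ remaining vertices of $S$, i.e.\ the exponent $t(s-1)=ts-t$. If $x\in T$, then property~(1) of $S$-resistance transitivity gives $R_G(x,g_i)=R_G(x,g_1)$ irrespective of the attachment point, so each $x\in T$ yields $t$ copies of $R_G(x,g_1)+R_0$, the last listed class. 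Since classes (A)--(D) and the two mixed classes exhaust all unordered pairs of $\widetilde G$ without overlap, their union is exactly the claimed multiset. I expect the only real obstacle to be the bookkeeping: verifying that the counts $\binom{a_i}{2}$, $a_ia_j$, $ts-t$ and $t$ tally every pair with no omission or repetition, and confirming that $S$-resistance transitivity is precisely what collapses all $R_G(x,g_i)$ to $R_G(x,g_1)$ and all $R_G(g_i,g_j)$ to $c$; the per-pair resistance values themselves are immediate from Lemma~\ref{lemma:RDcut}.
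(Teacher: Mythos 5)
Your proof is correct. The paper offers no argument for this lemma at all (its proof reads only ``The proof is obvious''), and your decomposition of the vertex pairs of $G(S,A,\mathcal{H},T_1)$ by location, evaluated with repeated use of Lemma~\ref{lemma:RDcut} together with the two defining properties of $S$-resistance transitivity, is exactly the suppressed bookkeeping: each of your classes matches one term of the stated formula, the counts $\binom{a_i}{2}$, $a_ia_j$, $t(s-1)$ and $t$ agree with the exponents, and your observation that root--interior pairs are absorbed into $t\cdot\RS(H_1)$ (so the mixed class must exclude $x=g_i$) is the one point where double counting could otherwise occur.
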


\begin{theorem}\label{theorem:2}
	Let $G$ be a $S$-resistance transitive graph with $V(G)=\{g_1, g_2, \ldots, g_{n_1}\}$, $S=\{g_1, g_2, \ldots, g_s\}$ and $T=\{g_{s+1}, g_{s+2}, \ldots, g_{n_1}\}$.
	There exists a constant $c$ such that for any two vertices $s_1 \neq s_2 \in S, $
	$R_G (s_1, s_2)=c.$ Let $H_1, H_2, \ldots, H_t$ be $t$ graphs with $V(H_i) =\{h_{i, 1}, h_{i, 2}, \ldots, h_{i, n_i} \}$ and $H_i \mathcal{U} H_j$ for $i,j \in \{1, 2, \ldots, t\}$. Assume $\RSV(H_i, h_{i,1}) = \RSV(H_j, h_{j,1})$.
	Let $\mathcal{H} = (H_1, H_2, \ldots, H_t)$  and $T_1=\{h_{1,1},h_{2,1},\ldots$,\\$h_{t,1}\}$.
	Let $A=\{a_1, a_2, \ldots, a_p\}$ and $B=\{b_1, b_2, \ldots, b_q\}$ be two partitions of a positive integer $t$, where $p, q \leq s$. If $A$ and $B$ are of equal sums of squares, then graphs $G (S, A, \mathcal{H}, T_1)$ and $G (S, B, \mathcal{H}, T_1)$ have the same resistance spectrum.
\end{theorem}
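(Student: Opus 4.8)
The plan is to reduce the claim to a direct comparison of the two explicit resistance spectra furnished by Lemma \ref{lemma:2}. First I would apply Lemma \ref{lemma:2} twice: once with the partition $A=\{a_1,\ldots,a_p\}$ to describe $\RS(G(S,A,\mathcal{H},T_1))$, and once with $B=\{b_1,\ldots,b_q\}$ to describe $\RS(G(S,B,\mathcal{H},T_1))$. Writing the two multisets side by side, I would observe that four of the six constituent blocks are manifestly independent of the chosen partition: the block $\RS(G)$, the block $t\cdot\RS(H_1)$, the block $[\RSV(H_1,h_{1,1})+c]^{ts-t}$, and the block $\{[R_G(g_i,g_1)+\RSV(H_1,h_{1,1})]^{t}\mid i=s+1,\ldots,n\}$. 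Each of these depends only on $G$, on $H_1$, on $t$, and on $s$, never on $A$ or $B$, so all four contribute identically to both spectra and may be set aside.

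The whole argument therefore rests on the two remaining blocks, which are the only place the partition enters: the self-sum block, of multiplicity $\sum_{i=1}^{p}\tfrac{a_i(a_i-1)}{2}$, and the cross-sum block, of multiplicity $\sum_{1\le i<j\le p}a_i a_j$. The key step is to rewrite each multiplicity purely in terms of the sum of squares and the fixed total $t=\sum_i a_i$. For the self-sum block I would use
\[
\sum_{i=1}^{p}\frac{a_i(a_i-1)}{2}=\frac12\Bigl(\sum_{i=1}^{p}a_i^2-\sum_{i=1}^{p}a_i\Bigr)=\frac12\Bigl(\sum_{i=1}^{p}a_i^2-t\Bigr),
\]
and for the cross-sum block I would use the elementary identity $t^2=\bigl(\sum_i a_i\bigr)^2=\sum_i a_i^2+2\sum_{i<j}a_i a_j$, giving
\[
\sum_{1\le i<j\le p}a_i a_j=\frac12\Bigl(t^2-\sum_{i=1}^{p}a_i^2\Bigr).
\]
This second identity is precisely the content of the earlier Proposition on partitions of equal sums of squares. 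Both displayed quantities depend on the partition only through $\sum_i a_i^2$.

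By hypothesis $A$ and $B$ are of equal sums of squares, so $\sum_{i=1}^{p}a_i^2=\sum_{j=1}^{q}b_j^2$; substituting into the two formulas shows that the self-sum multiplicities agree and the cross-sum multiplicities agree across $A$ and $B$. Since the resistance-distance multisets attached to these two blocks, namely $\RSV(H_1,h_{1,1})+\RSV(H_1,h_{1,1})$ and $\RSV(H_1,h_{1,1})+\RSV(H_1,h_{1,1})+c$, are themselves unchanged by the choice of partition, the two variable blocks coincide as multisets. Combining this with the four invariant blocks yields $\RS(G(S,A,\mathcal{H},T_1))=\RS(G(S,B,\mathcal{H},T_1))$, as required.

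I expect the bulk of the genuine work to have already been absorbed into Lemma \ref{lemma:2}, whose explicit multiset decomposition reduces the theorem to an arithmetic comparison. Granting that lemma, there is no real obstacle: the only substantive observation is that the partition enters the spectrum exclusively through the two symmetric functions $\sum_i a_i^2$ and $\sum_{i<j}a_i a_j$, both of which are pinned down by the equal-sums-of-squares hypothesis once $t$ is fixed. Because multiset union simply adds multiplicities block by block, matching the two variable blocks suffices, and no difficulty arises even if some resistance value happens to be common to several blocks. If one instead wanted a fully self-contained treatment, the point demanding the most attention would be the justification of the multiplicities in Lemma \ref{lemma:2} itself, obtained by iterating Proposition \ref{proposition:1} across the identifications dictated by the partition and invoking the $S$-resistance transitivity to supply the common value $c$.
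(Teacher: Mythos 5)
Your proposal is correct and follows essentially the same route as the paper: both apply Lemma \ref{lemma:2} to each of $G(S,A,\mathcal{H},T_1)$ and $G(S,B,\mathcal{H},T_1)$ and then observe that the only partition-dependent data are the multiplicities $\sum_i a_i(a_i-1)/2$ and $\sum_{i<j}a_ia_j$, which the equal-sums-of-squares hypothesis (together with the common total $t$) forces to coincide. Your explicit algebraic identities simply spell out what the paper delegates to its earlier proposition on partitions of equal sums of squares, so there is no substantive difference.
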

\begin{proof}
	By Lemma \ref{lemma:2}, we have
	the resistance spectrum of $G (S, A, \mathcal{H}, T_1)$ is
	\begin{align*}
	\RS (G)
	\bigcup t\cdot \RS (H_1)
	&\bigcup \{[\RSV (H_1,h_{1,1})+\RSV (H_1,h_{1,1})]^{\sum_{i=1}^{p} \frac{a_i (a_i-1)} {2}} \}\\
	&\bigcup \{ [\RSV (H_1,h_{1,1})+\RSV (H_1,h_{1,1})+c]^{\sum_{1 \leq i < j \leq p} a_i a_j} \}\\
	&\bigcup \{ [\RSV (H_1, h_{1,1})+c]^{ts-t} \} \bigcup \left\{\left[ R_{G}(g_{i}, g_{1})+\RSV (H_1, h_{1,1}) \right]^t \mid i=s+1, \ldots, n \right\}.
	\end{align*}
	and
	the resistance spectrum of $G (S, B, \mathcal{H}, T_1)$ is
	\begin{align*}
	\RS (G)
	\bigcup t\cdot \RS (H_1)
	&\bigcup \{[\RSV (H_1,h_{1,1})+\RSV (H_1,h_{1,1})]^{\sum_{i=1}^{q} \frac{b_i (b_i-1)} {2}} \}\\
	&\bigcup \{ [\RSV (H_1,h_{1,1})+\RSV (H_1,h_{1,1})+c]^{\sum_{1 \leq i < j \leq q} b_i b_j} \}\\
	&\bigcup \{ [\RSV (H_1, h_{1,1})+c]^{ts-t} \} \bigcup \left\{\left[ R_{G}(g_{i}, g_{1})+\RSV (H_1, h_{1,1}) \right]^t \mid i=s+1, \ldots, n \right\}.
	\end{align*}	
	Since $A$ and $B$ are of equal sums of squares, we have
	$$\sum_{i=1}^{p} \frac{a_i (a_i-1)} {2}={\sum_{i=1}^{q} \frac{b_i (b_i-1)} {2}}$$
	and
	$$\sum_{1 \leq i < j \leq p} a_i a_j={\sum_{1 \leq i < j \leq q} b_i b_j}.$$
	Thus graphs $G (S, A, \mathcal{H}, T_1)$ and $G (S, B, \mathcal{H}, T_1)$ have the same resistance spectrum.
\end{proof}

\begin{example} \label{exam:n9}
	Let $\mathcal{H} = (H_1, H_2, \ldots, H_6)$, where $H_i$ is a path of length $1$ with $V(H_i)=\{h_{i, 1}, h_{i, 2}\}$ for $i=1, 2, \ldots, 6$.
	Let $T=\{h_{1,1},h_{2,1},\ldots,h_{6,1}\}$, $A_1=\{3, 3\}, A_2=\{4, 1, 1\}$. $A_1$ and $A_2$ are of equal sums of squares.
	Let $S_1 = \overline{K_{3}}$ and $S_2 = K_{3}$. Clearly, $S_1$ and $S_2$ are $V(S_1)$-resistance transitive and $V(S_2)$-resistance transitive, respectively.
	Consequently, the graphs $S_1(V(S_1), A_1, \mathcal{H}, T)$ and $S_1 (V(S_1), A_2, \mathcal{H}, T)$ have the same resistance spectrum, as shown in Pair 1 of Figure \ref{fig:Counterexamples13}; the graphs $S_2(V(S_2), A_1, \mathcal{H}, T)$ and $S_2(V(S_2), A_2, \mathcal{H}, T)$ have the same resistance spectrum, as depicted in Pair 2 of Figure \ref{fig:Counterexamples13}.
	
\end{example}

\begin{proposition}\label{proposition:2}
	Let $G$ be any graph of order $n$, $X$ be a subset of $V (G)$, and $S$ be a complete graph $K_r$ or an empty graph $\overline{K_r}$. $H$ is obtained from $G$ and $S$ by join every vertex of $X$ to every vertex of $S$. Then $H$ is $V(S)$-resistance transitive graph.\\
	\begin{proof}
		The proof is obvious and omitted.
	\end{proof}
\end{proposition}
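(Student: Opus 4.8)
The plan is to exploit the very strong symmetry that the construction forces on the vertices of $S$. The central observation is that, because every vertex of $X$ is joined to every vertex of $S$, and the vertices of $S$ are either mutually adjacent (when $S=K_r$) or mutually nonadjacent (when $S=\overline{K_r}$), the vertices of $V(S)$ are completely interchangeable in $H$. Concretely, I claim that every permutation $\pi$ of $V(S)$, extended to act as the identity on $V(G)$, is an automorphism of $H$.

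To verify the claim I would check that $\pi$ preserves adjacency type by type. An edge lying inside $G$ has both endpoints fixed by $\pi$, hence is preserved. An edge $\{x,w\}$ with $x\in X$ and $w\in V(S)$ is sent to $\{x,\pi(w)\}$, which is again an edge because $x\in X$ is joined to the vertex $\pi(w)\in V(S)$. Finally, the edges inside $S$ are exactly all pairs of $V(S)$ when $S=K_r$, and none when $S=\overline{K_r}$; in either case this set of pairs is invariant under $\pi$. The same analysis applied to nonedges shows that $\pi$ is a bijection preserving both adjacency and nonadjacency, so $\pi\in\operatorname{Aut}(H)$. In particular $\operatorname{Aut}(H)$ induces the full symmetric group on $V(S)$ while fixing $V(G)$ pointwise.

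Now I would invoke the standard fact that resistance distance is invariant under graph automorphisms: if $\sigma\in\operatorname{Aut}(H)$, then $R_H(\sigma(a),\sigma(b))=R_H(a,b)$ for all $a,b\in V(H)$, since an isomorphism of graphs induces an isomorphism of the associated electrical networks. With this in hand, both defining properties of $V(S)$-resistance transitivity follow immediately, where $T=V(H)\setminus V(S)=V(G)$. For property (1), fix $x\in T$ and two vertices $u,v\in V(S)$, and let $\sigma$ be the transposition of $u$ and $v$ (identity elsewhere), which is an automorphism by the claim. Since $\sigma$ fixes $x$, we obtain $R_H(x,u)=R_H(\sigma(x),\sigma(u))=R_H(x,v)$. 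For property (2), given distinct $u,v\in V(S)$ and any other pair $u',v'\in V(S)$, choose a permutation $\pi$ of $V(S)$ with $\pi(u)=u'$ and $\pi(v)=v'$; then $R_H(u,v)=R_H(u',v')$, so the resistance distance between any two vertices of $S$ equals one common constant $c$. Hence $H$ is $V(S)$-resistance transitive.

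There is essentially no analytic obstacle here: the whole argument is symmetry-based and requires no explicit computation of effective resistances. The only points demanding care are, first, confirming that a transposition of two $S$-vertices really is an automorphism in both the $K_r$ and $\overline{K_r}$ cases, which the edge-type check above settles; and second, making sure the statement stays correct in degenerate situations, for instance when $X=\emptyset$, so that $S$ forms a separate component and some resistances equal $+\infty$. The symmetry argument covers these cases uniformly, since equalities such as $R_H(x,u)=R_H(x,v)$ and $R_H(u,v)=R_H(u',v')$ hold whether the common value is finite or $+\infty$.
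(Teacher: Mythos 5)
Your proof is correct, and in fact it supplies rigor where the paper offers none: the paper's ``proof'' of this proposition is literally the single sentence ``The proof is obvious and omitted.'' Your argument---that any permutation of $V(S)$ extended by the identity on $V(G)$ is an automorphism of $H$ (checked separately for edges inside $G$, join edges to $X$, and edges inside $S$ in both the $K_r$ and $\overline{K_r}$ cases), combined with the automorphism-invariance of resistance distance---is exactly the natural way to make the claim precise, and your attention to the degenerate case $X=\emptyset$ (where some resistances are $+\infty$) is a detail the paper silently glosses over.
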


\begin{theorem}\label{thm:pn9}
	If $n \geq 10$, then there are at least $2(n-9)p(n-9)$ pairs of graphs of order $n$ with the same resistance spectrum, where $p(n-9)$ is the number of partitions of the integer $n-9$.
\end{theorem}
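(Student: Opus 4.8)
The plan is to realize the required pairs as instances of Theorem \ref{theorem:2}, so that equality of the resistance spectra is automatic, and then to spend essentially all of the effort on controlling the order of the constructed graphs and on proving that the constructions are pairwise non-isomorphic. Concretely, I would fix once and for all the equal-sum-of-squares pair $A=\{3,3\}$ and $B=\{4,1,1\}$ of partitions of $t=6$ (as in Example \ref{exam:n9}), together with a tuple $\mathcal{H}=(H_1,\dots,H_6)$ in which every $H_i$ is a single edge rooted at $h_{i,1}$; these are trivially pairwise related by $\mathcal{U}$ with equal $\RSV$ at their roots, so the hypotheses of Theorem \ref{theorem:2} are met for \emph{any} resistance-transitive host. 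For such a host $\Gamma$ with distinguished set $S$ of size $3$, the two graphs $\Gamma(S,A,\mathcal{H},T_1)$ and $\Gamma(S,B,\mathcal{H},T_1)$ then share the same resistance spectrum by Theorem \ref{theorem:2}, and their common order is $|V(\Gamma)|+6$.

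To reach order exactly $n$ and to manufacture many hosts, I would invoke Proposition \ref{proposition:2}: starting from a base graph $R$ on $n-9$ vertices and a chosen subset $X\subseteq V(R)$, join every vertex of $X$ to every vertex of $S$, where $S$ is either $K_3$ or $\overline{K_3}$. The result is a $V(S)$-resistance-transitive graph $\Gamma$ of order $(n-9)+3=n-6$, so the pair above has order $n$ for every $n\ge 10$, and the equality of spectra needs no further computation (it is Lemma \ref{lemma:2} together with Theorem \ref{theorem:2}). Three independent degrees of freedom now feed the count: the binary choice $S\in\{K_3,\overline{K_3}\}$ supplies the factor $2$; taking $R$ to be a disjoint union of paths and recording the multiset of its component orders supplies a partition of $n-9$, hence the factor $p(n-9)$; and an additional integer marker governing the placement of the join $X$ — a position parameter that ranges over $n-9$ admissible values \emph{without} consuming extra vertices — supplies the factor $n-9$. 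A genuine design constraint, already at this stage, is that the partition parameter and the integer parameter must be kept logically independent, so that the family has size $2\,(n-9)\,p(n-9)$ (a product) rather than a convolution such as $\sum_j p(j)$; this forces the placement marker to be chosen so that it does not interact with the component orders of $R$.

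The hard part will be the distinctness bookkeeping: showing that these $2(n-9)p(n-9)$ constructions yield genuinely different pairs of non-isomorphic graphs, so that the stated lower bound is valid. Inside a single pair, I would separate the two members by the local pattern of pendant leaves at the host vertices of $S$: the $A$-graph carries the pattern $3{+}3{+}0$ while the $B$-graph carries $4{+}1{+}1$, visible in the degree sequence together with the structure of $\Gamma$, so the two cannot be isomorphic. Across different parameter triples, I would exhibit invariants that recover each parameter: connectivity (equivalently the number of $+\infty$ entries in the spectrum) separates the $\overline{K_3}$ host from the $K_3$ host; the multiset of orders of the pendant-path components of $R$, read off after deleting the small fixed gadget, recovers the partition of $n-9$; and the placement marker is recovered from the remaining structural data. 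The real obstacle is ruling out \emph{accidental} coincidences created by the reflection symmetry of a path, by the symmetry of repeated components of a linear forest, and by the vertex-transitivity of $K_3$ and $\overline{K_3}$: a priori two different placements, or two different base forests, could produce isomorphic graphs. I would therefore fix the variable data canonically — always attaching the variable part at a distinguished end and ordering the path components by non-increasing order — and then verify that the invariants above are complete, i.e.\ jointly determine the parameter triple. Once this symmetry-breaking is in place and the invariants are checked, the $2(n-9)p(n-9)$ pairs are distinct and non-isomorphic, and the theorem follows.
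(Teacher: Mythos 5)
Your overall scheme is exactly the paper's own proof: the same gadget (six pendant edges distributed as $A=\{3,3\}$ versus $B=\{4,1,1\}$), Theorem \ref{theorem:2} for equality of spectra, Proposition \ref{proposition:2} to produce $V(S)$-resistance-transitive hosts of order $n-6$ from a base graph on $n-9$ vertices, and the same $2\times(n-9)\times p(n-9)$ parameter count (the paper takes the base graph to be a disjoint union of cliques with sizes given by the partition, where you take disjoint paths; that difference is immaterial). The spectral-equality half of your argument is therefore sound. The problems are in the two places you defer. First, the parameter that is supposed to carry the factor $n-9$ is never defined: you state the design constraints it must satisfy (independence from the partition, no extra vertices) but never produce it. The paper's choice is concrete and simple: order the $n-9$ base vertices so that the parts of the partition occupy consecutive blocks, and let the join set $X$ consist of the first $j$ vertices, $j\in\{1,\ldots,n-9\}$. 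Without some such definition there is no actual family of graphs to which your non-isomorphism bookkeeping can be applied, so this is a genuine missing step, not a routine detail.

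Second, the one cross-parameter invariant you do specify is false: connectivity (equivalently the number of $+\infty$ entries) does \emph{not} separate the $K_3$ host from the $\overline{K_3}$ host. The three vertices of $S$ are joined to the nonempty set $X$ in both cases, so connectivity of the constructed graph depends only on whether every component of the base graph $R$ meets $X$, never on the choice of $S$. For instance, at $n=10$ the base is a single vertex and the two hosts are $K_4$ and $K_{1,3}$, both connected; and whenever some component of $R$ misses $X$, the leftover components are literally the same for both choices of $S$, so even the multiplicities of $+\infty$ agree. Hence your invariant list cannot recover the parameter $k$ and the injectivity argument fails as stated. The repair is easy but must be made: locate the three special vertices of $S$ inside the distinguished component (in the $B$-graph the unique vertex with at least four pendant leaves, in the $A$-graph the two vertices with exactly three, the remaining one identified by having the same non-pendant neighborhood), and then test mutual adjacency among them, or simply compare edge counts once the other parameters are matched. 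With the placement parameter defined as above and this invariant corrected, your proof coincides with the paper's.
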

\begin{proof}
	Set $t = n-9$.
	Let $C_1, C_2, \ldots, C_{p(t)}$ be all partitions of $t$ with $C_i=\{c_{i,1}, c_{i,2}, \ldots, c_{i,q_i}\}$, where $c_{i, 1} \geq c_{i, 2} \geq \cdots \geq c_{i, q_i}$, let $G_i$ be a graph with $t$ vertices, where the vertex set is $\{g_{i,1}, g_{i,2}, \ldots, g_{i,t}\}$, $i \in \{1, 2, \ldots, p(t)\}$. The edges of $G_i$ are defined as follows: The first $c_{i,1}$ vertices form a complete subgraph, then the next $c_{i,2}$ vertices form a complete subgraph, and so on, the last $c_{i,q_i}$ vertices form a complete subgraph finally.
	Let $X_{i, j} = \{g_{i, 1}, g_{i, 2}, \ldots, g_{i, j} \}$, $S_1 = K_{3}$ and $S_2 =  \overline{K_{3}}$.	Let $Q_{i, j, k}$ be a graph obtained from $G_{i}$ and $S_k$ by join every vertex of $X_{i, j}$ to every vertex of $S_k$, where $i \in \{1, 2, \ldots, p(t)\}$, $j \in \{1, 2, \ldots, t\}$ and $k \in \{1, 2\}$.
	Thus, by Proposition \ref{proposition:2}, $Q_{i, j, k}$ is $V (S_k)$-resistance transitive graph.

	Let $\mathcal{H} = (H_1, H_2, \ldots, H_6)$, where $H_i$ is a path of length $1$ with $V(H_i)=\{h_{i, 1}, h_{i, 2}\}$ for $i=1, 2, \ldots, 6$.
	Let $T=\{h_{1,1},h_{2,1},\ldots,h_{6,1}\}$.
	$A_1:=\{3, 3\}$ and $A_2:=\{4, 1, 1\}$ are two different partitions of a positive integer $6$, since $A_1$ and $A_2$ are of equal sums of squares,
	by Theorem \ref{theorem:2}, graphs $Q_{i, j, k} (V(S_k), A_1, \mathcal{H}, T)$ and $Q_{i, j, k} (V(S_k), A_2, \mathcal{H}, T)$ have the same resistance spectrum.
	Note that there exists a unique connected component $Q$ in $Q_{i, j, k} (V(S_k), A_l, \mathcal{H}, T)$ satisfying the conditions: $Q$ contains at least $6$ vertices with degree $1$, and there exist three vertices with degree $1$ in $Q$ that share a common neighbor.
	It follows easily that $Q_{i_1, j_1, k_1} (V(S_{k_1}), A_{l_1}, \mathcal{H}, T)$ and $Q_{i_2, j_2, k_2} (V(S_{k_2}), A_{l_2}, \mathcal{H}, T)$ are not isomorphic if $i_1=i_2$, $j_1=j_2$, $k_1=k_2$, and $l_1=l_2$ are not all simultaneously satisfied.
	
	According to the multiplication principle, there are at least $2tp(t)$ pair graphs with the same resistance spectrum.
	Therefore, when $n\ge 10$, there are at least $2 \cdot (n-9) p(n-9)$ pairs of graphs with the same resistance spectrum.	
\end{proof}

\begin{example}
	There exist $8$ pairs of graphs of order $11$ with the same resistance
	spectrum, as shown in Figures \ref{fig:Pair1}--\ref{fig:Pair8}.
	Here $Q_{i, j, k, l} = Q_{i, j, k} (V(S_k), A_l, \mathcal{H}, T)$, where $Q_{i, j, k} (V(S_k), A_l, \mathcal{H}, T)$ is defined as in Theorem \ref{thm:pn9} for $i, j, k, l \in \{1,2\}$.
\end{example}
\tikzstyle{vertex}=[circle, draw, fill=black, inner sep=0, minimum size=4pt]%设置顶点格式
\tikzstyle{vertex2}=[circle, draw, fill=black, inner sep=0, minimum size=2pt]
\tikzstyle{line}=[line width=1pt,color=black]%设置边格式
\tikzstyle{line2}=[line width=1pt,color=black]
\tikzstyle{line3}=[line width=1pt]
\newcommand{\VERTEX}{%设置顶点位置
	\node[vertex] (1) at (2.5,3) {};
	\node[vertex] (2) at (3.5,3) {};
	\node[vertex2] (3) at (4.,2.5) {};
	\node[vertex2] (4) at (2.,2.5) {};
	\node[vertex2] (5) at (3.,2) {};
	\node[vertex2] (6) at (1.5,1.5) {};
	\node[vertex2] (7) at (2,1.5) {};
	\node[vertex2] (8) at (2.5,1.5) {};
	\node[vertex2] (9) at (3.5,1.5) {};
	\node[vertex2] (10) at (4,1.5) {};
	\node[vertex2] (11) at (4.5,1.5) {};
}
\newcommand{\VERTEXb}{
	\node[vertex] (1) at (2.5,3) {};
	\node[vertex] (2) at (3.5,3) {};
	\node[vertex2] (3) at (4.5,2.5) {};
	\node[vertex2] (4) at (3,2) {};
	\node[vertex2] (5) at (1.5,2.5) {};
	\node[vertex2] (6) at (2.25,1.5) {};
	\node[vertex2] (7) at (2.75,1.5) {};
	\node[vertex2] (8) at (3.25,1.5) {};
	\node[vertex2] (9) at (3.75,1.5) {};
	\node[vertex2] (10) at (4.5,1.5) {};
	\node[vertex2] (11) at (1.5,1.5) {};
}
\newcommand{\VERTEXc}{
	\node[vertex] (1) at (2.5,3) {};
	\node[vertex] (2) at (3.5,3) {};
	\node[vertex2] (3) at (4.,2.25) {};
	\node[vertex2] (4) at (2.,2.25) {};
	\node[vertex2] (5) at (3.,2.25) {};
	\node[vertex2] (6) at (1.5,1.5) {};
	\node[vertex2] (7) at (2,1.5) {};
	\node[vertex2] (8) at (2.5,1.5) {};
	\node[vertex2] (9) at (3.5,1.5) {};
	\node[vertex2] (10) at (4,1.5) {};
	\node[vertex2] (11) at (4.5,1.5) {};
}
\newcommand{\VERTEXd}{
	\node[vertex] (1) at (2.5,3) {};
	\node[vertex] (2) at (3.5,3) {};
	\node[vertex2] (3) at (4.5,2.25) {};
	\node[vertex2] (4) at (3,2.25) {};
	\node[vertex2] (5) at (1.5,2.25) {};
	\node[vertex2] (6) at (2.25,1.5) {};
	\node[vertex2] (7) at (2.75,1.5) {};
	\node[vertex2] (8) at (3.25,1.5) {};
	\node[vertex2] (9) at (3.75,1.5) {};
	\node[vertex2] (10) at (4.5,1.5) {};
	\node[vertex2] (11) at (1.5,1.5) {};
}
\begin{figure}[htpb!]
	\centering
	\begin{minipage}[b]{0.48\textwidth}
		\begin{subfigure}[b]{0.48\textwidth}
			\centering
			\begin{tikzpicture}
			\VERTEX
			\draw [line3] (3) -- (4);
			\draw [line3] (3) -- (5);
			\draw [line3] (4) -- (5);
			\draw [line2] (4) -- (6);
			\draw [line2] (4) -- (7);
			\draw [line2] (4) -- (8);
			\draw [line2] (3) -- (9);
			\draw [line2] (3) -- (10);
			\draw [line2] (3) -- (11);
			\draw [line] (1) -- (3);
			\draw [line] (1) -- (4);
			\draw [line] (1) -- (5);
			\draw [line3,line width=2pt] (1) -- (2);
			\end{tikzpicture}
			\caption{$Q_{1, 1, 1, 1}$}
		\end{subfigure}
		\begin{subfigure}[b]{0.48\textwidth}
			\centering
			\begin{tikzpicture}
			\VERTEXb
			\draw [line3] (3) -- (4);
			\draw [line3] (3) -- (5);
			\draw [line3] (4) -- (5);
			\draw [line2] (4) -- (6);
			\draw [line2] (4) -- (7);
			\draw [line2] (4) -- (8);
			\draw [line2] (4) -- (9);
			\draw [line2] (5) -- (11);
			\draw [line2] (3) -- (10);
			\draw [line] (1) -- (3);
			\draw [line] (1) -- (4);
			\draw [line] (1) -- (5);
			\draw [line3,line width=2pt] (1) -- (2);
			\end{tikzpicture}
			\caption{$Q_{1, 1, 1, 2}$}
		\end{subfigure}
		\caption{$Q_{1, 1, 1, 1}$ and $Q_{1, 1, 1, 2}$} \label{fig:Pair1}
	\end{minipage}
	\hfill
	\begin{minipage}[b]{0.48\textwidth}
		\begin{subfigure}[b]{0.48\textwidth}
			\centering
			\begin{tikzpicture}
			\VERTEX
			\draw [line3] (3) -- (4);
			\draw [line3] (3) -- (5);
			\draw [line3] (4) -- (5);
			\draw [line2] (4) -- (6);
			\draw [line2] (4) -- (7);
			\draw [line2] (4) -- (8);
			\draw [line2] (3) -- (9);
			\draw [line2] (3) -- (10);
			\draw [line2] (3) -- (11);
			\draw [line] (1) -- (3);
			\draw [line] (1) -- (4);
			\draw [line] (1) -- (5);
			\draw [line] (2) -- (3);
			\draw [line] (2) -- (4);
			\draw [line] (2) -- (5);
			\draw [line3,line width=2pt] (1) -- (2);
			\end{tikzpicture}
			\caption{$Q_{1, 2, 1, 1}$}
		\end{subfigure}
		\begin{subfigure}[b]{0.48\textwidth}
			\centering
			\begin{tikzpicture}
			\VERTEXb
			\draw [line3] (3) -- (4);
			\draw [line3] (3) -- (5);
			\draw [line3] (4) -- (5);
			\draw [line2] (4) -- (6);
			\draw [line2] (4) -- (7);
			\draw [line2] (4) -- (8);
			\draw [line2] (4) -- (9);
			\draw [line2] (5) -- (11);
			\draw [line2] (3) -- (10);
			\draw [line] (1) -- (3);
			\draw [line] (1) -- (4);
			\draw [line] (1) -- (5);
			\draw [line] (2) -- (3);
			\draw [line] (2) -- (4);
			\draw [line] (2) -- (5);
			\draw [line3,line width=2pt] (1) -- (2);
			\end{tikzpicture}
			\caption{$Q_{1, 2, 1, 2}$}
		\end{subfigure}
		\caption{$Q_{1, 2, 1, 1}$ and $Q_{1, 2, 1, 2}$} \label{fig:Pair2}
	\end{minipage}
\end{figure}
%%%222
\begin{figure}[htpb!]
	\centering
	\begin{minipage}[b]{0.48\textwidth}
		\begin{subfigure}[b]{0.48\textwidth}
			\centering
			\begin{tikzpicture}
			\VERTEXc
			\draw [line2] (4) -- (6);
			\draw [line2] (4) -- (7);
			\draw [line2] (4) -- (8);
			\draw [line2] (3) -- (9);
			\draw [line2] (3) -- (10);
			\draw [line2] (3) -- (11);
			\draw [line] (1) -- (3);
			\draw [line] (1) -- (4);
			\draw [line] (1) -- (5);
			\draw [line3,line width=2pt] (1) -- (2);
			\end{tikzpicture}
			\caption{$Q_{1, 1, 2, 1}$}
		\end{subfigure}
		\begin{subfigure}[b]{0.48\textwidth}
			\centering
			\begin{tikzpicture}
			\VERTEXd
			\draw [line2] (4) -- (6);
			\draw [line2] (4) -- (7);
			\draw [line2] (4) -- (8);
			\draw [line2] (4) -- (9);
			\draw [line2] (5) -- (11);
			\draw [line2] (3) -- (10);
			\draw [line] (1) -- (3);
			\draw [line] (1) -- (4);
			\draw [line] (1) -- (5);
			\draw [line3,line width=2pt] (1) -- (2);
			\end{tikzpicture}
			\caption{$Q_{1, 1, 2, 2}$}
		\end{subfigure}
		\caption{$Q_{1, 1, 2, 1}$ and $Q_{1, 1, 2, 2}$} \label{fig:Pair3}
	\end{minipage}
	\hfill
	\begin{minipage}[b]{0.48\textwidth}
		\begin{subfigure}[b]{0.48\textwidth}
			\centering
			\begin{tikzpicture}
			\VERTEXc
			\draw [line2] (4) -- (6);
			\draw [line2] (4) -- (7);
			\draw [line2] (4) -- (8);
			\draw [line2] (3) -- (9);
			\draw [line2] (3) -- (10);
			\draw [line2] (3) -- (11);
			\draw [line] (1) -- (3);
			\draw [line] (1) -- (4);
			\draw [line] (1) -- (5);
			\draw [line] (2) -- (3);
			\draw [line] (2) -- (4);
			\draw [line] (2) -- (5);
			\draw [line3,line width=2pt] (1) -- (2);
			\end{tikzpicture}
			\caption{$Q_{1, 2, 2, 1}$}
		\end{subfigure}
		\begin{subfigure}[b]{0.48\textwidth}
			\centering
			\begin{tikzpicture}
			\VERTEXd
			\draw [line2] (4) -- (6);
			\draw [line2] (4) -- (7);
			\draw [line2] (4) -- (8);
			\draw [line2] (4) -- (9);
			\draw [line2] (5) -- (11);
			\draw [line2] (3) -- (10);
			\draw [line] (1) -- (3);
			\draw [line] (1) -- (4);
			\draw [line] (1) -- (5);
			\draw [line] (2) -- (3);
			\draw [line] (2) -- (4);
			\draw [line] (2) -- (5);
			\draw [line3,line width=2pt] (1) -- (2);
			\end{tikzpicture}
			\caption{$Q_{1, 2, 2, 2}$}
		\end{subfigure}
		\caption{$Q_{1, 2, 2, 1}$ and $Q_{1, 2, 2, 2}$} \label{fig:Pair4}
	\end{minipage}
\end{figure}
%%3
\begin{figure}[htpb!]
	\centering
	\begin{minipage}[b]{0.48\textwidth}
		\begin{subfigure}[b]{0.48\textwidth}
			\centering
			\begin{tikzpicture}
			\VERTEX
			\draw [line3] (3) -- (4);
			\draw [line3] (3) -- (5);
			\draw [line3] (4) -- (5);
			\draw [line2] (4) -- (6);
			\draw [line2] (4) -- (7);
			\draw [line2] (4) -- (8);
			\draw [line2] (3) -- (9);
			\draw [line2] (3) -- (10);
			\draw [line2] (3) -- (11);
			\draw [line] (1) -- (3);
			\draw [line] (1) -- (4);
			\draw [line] (1) -- (5);
			\end{tikzpicture}
			\caption{$Q_{2, 1, 1, 1}$}
		\end{subfigure}
		\begin{subfigure}[b]{0.48\textwidth}
			\centering
			\begin{tikzpicture}
			\VERTEXb
			\draw [line3] (3) -- (4);
			\draw [line3] (3) -- (5);
			\draw [line3] (4) -- (5);
			\draw [line2] (4) -- (6);
			\draw [line2] (4) -- (7);
			\draw [line2] (4) -- (8);
			\draw [line2] (4) -- (9);
			\draw [line2] (5) -- (11);
			\draw [line2] (3) -- (10);
			\draw [line] (1) -- (3);
			\draw [line] (1) -- (4);
			\draw [line] (1) -- (5);
			\end{tikzpicture}
			\caption{$Q_{2, 1, 1, 2}$}
		\end{subfigure}
		\caption{$Q_{2, 1, 1, 1}$ and $Q_{2, 1, 1, 2}$} \label{fig:Pair5}
	\end{minipage}
	\hfill
	\begin{minipage}[b]{0.48\textwidth}
		\begin{subfigure}[b]{0.48\textwidth}
			\centering
			\begin{tikzpicture}
			\VERTEX
			\draw [line3] (3) -- (4);
			\draw [line3] (3) -- (5);
			\draw [line3] (4) -- (5);
			\draw [line2] (4) -- (6);
			\draw [line2] (4) -- (7);
			\draw [line2] (4) -- (8);
			\draw [line2] (3) -- (9);
			\draw [line2] (3) -- (10);
			\draw [line2] (3) -- (11);
			\draw [line] (1) -- (3);
			\draw [line] (1) -- (4);
			\draw [line] (1) -- (5);
			\draw [line] (2) -- (3);
			\draw [line] (2) -- (4);
			\draw [line] (2) -- (5);
			\end{tikzpicture}
			\caption{$Q_{2, 2, 1, 1}$}
		\end{subfigure}
		\begin{subfigure}[b]{0.48\textwidth}
			\centering
			\begin{tikzpicture}
			\VERTEXb
			\draw [line3] (3) -- (4);
			\draw [line3] (3) -- (5);
			\draw [line3] (4) -- (5);
			\draw [line2] (4) -- (6);
			\draw [line2] (4) -- (7);
			\draw [line2] (4) -- (8);
			\draw [line2] (4) -- (9);
			\draw [line2] (5) -- (11);
			\draw [line2] (3) -- (10);
			\draw [line] (1) -- (3);
			\draw [line] (1) -- (4);
			\draw [line] (1) -- (5);
			\draw [line] (2) -- (3);
			\draw [line] (2) -- (4);
			\draw [line] (2) -- (5);
			\end{tikzpicture}
			\caption{$Q_{2, 2, 1, 2}$}
		\end{subfigure}
		\caption{$Q_{2, 2, 1, 1}$ and $Q_{2, 2, 1, 2}$} \label{fig:Pair6}
	\end{minipage}
\end{figure}
%%4
\begin{figure}[htpb!]
	\centering
	\begin{minipage}[b]{0.48\textwidth}
		\begin{subfigure}[b]{0.48\textwidth}
			\centering
			\begin{tikzpicture}
			\VERTEXc
			\draw [line2] (4) -- (6);
			\draw [line2] (4) -- (7);
			\draw [line2] (4) -- (8);
			\draw [line2] (3) -- (9);
			\draw [line2] (3) -- (10);
			\draw [line2] (3) -- (11);
			\draw [line] (1) -- (3);
			\draw [line] (1) -- (4);
			\draw [line] (1) -- (5);
			\end{tikzpicture}
			\caption{$Q_{2, 1, 2, 1}$}
		\end{subfigure}
		\begin{subfigure}[b]{0.48\textwidth}
			\centering
			\begin{tikzpicture}
			\VERTEXd
			\draw [line2] (4) -- (6);
			\draw [line2] (4) -- (7);
			\draw [line2] (4) -- (8);
			\draw [line2] (4) -- (9);
			\draw [line2] (5) -- (11);
			\draw [line2] (3) -- (10);
			\draw [line] (1) -- (3);
			\draw [line] (1) -- (4);
			\draw [line] (1) -- (5);
			\end{tikzpicture}
			\caption{$Q_{2, 1, 2, 2}$}
		\end{subfigure}
		\caption{$Q_{2, 1, 2, 1}$ and $Q_{2, 1, 2, 2}$} \label{fig:Pair7}
	\end{minipage}
	\hfill
	\begin{minipage}[b]{0.48\textwidth}
		\begin{subfigure}[b]{0.48\textwidth}
			\centering
			\begin{tikzpicture}
			\VERTEXc
			\draw [line2] (4) -- (6);
			\draw [line2] (4) -- (7);
			\draw [line2] (4) -- (8);
			\draw [line2] (3) -- (9);
			\draw [line2] (3) -- (10);
			\draw [line2] (3) -- (11);
			\draw [line] (1) -- (3);
			\draw [line] (1) -- (4);
			\draw [line] (1) -- (5);
			\draw [line] (2) -- (3);
			\draw [line] (2) -- (4);
			\draw [line] (2) -- (5);
			\end{tikzpicture}
			\caption{$Q_{2, 2, 2, 1}$}
		\end{subfigure}
		\begin{subfigure}[b]{0.48\textwidth}
			\centering
			\begin{tikzpicture}
			\VERTEXd
			\draw [line2] (4) -- (6);
			\draw [line2] (4) -- (7);
			\draw [line2] (4) -- (8);
			\draw [line2] (4) -- (9);
			\draw [line2] (5) -- (11);
			\draw [line2] (3) -- (10);
			\draw [line] (1) -- (3);
			\draw [line] (1) -- (4);
			\draw [line] (1) -- (5);
			\draw [line] (2) -- (3);
			\draw [line] (2) -- (4);
			\draw [line] (2) -- (5);
			\end{tikzpicture}
			\caption{$Q_{2, 2, 2, 2}$}
		\end{subfigure}
		\caption{$Q_{2, 2, 2, 1}$ and $Q_{2, 2, 2, 2}$} \label{fig:Pair8}
	\end{minipage}
\end{figure}

%%%%%%%%%%%%%%%%%%%%%%%%%%%%%%%%%%%%%%%%%%%%%%%%%%%%%
\section{Conclusion}\label{sec:Conclusion}
%%%%%%%%%%%%%%%%%%%%%%%%%%%%%%%%%%%%%%%%%%%%%%%%%%%%%
 In this paper, we propose a method for constructing graphs with the same resistance spectrum. We also present a lower bound for the number of pairs of graphs which have the same resistance spectrum among graphs with $n(\ge 10)$ vertices.  Our method is derived from observing pairs 1 and 2 in Figure 1. By carefully examining the other pairs of graphs in Figure 1, it is possible to find more methods for construting non-isomorphic graphs with the same resistance spectrum.

%%%%%%%%%%%%%%%%%%%%%%%%%%%%%%%%%%%%%%%%%%%%%%%%%%%%%

%\section*{References}

\bibliography{xfpan-2024}

\end{document}